\documentclass{amsart}
\usepackage{amsfonts,amscd}

\newtheorem{claim}{}[section]
\newtheorem{theorem}[claim]{Theorem}
\newtheorem{lemma}[claim]{Lemma}
\newtheorem{proposition}[claim]{Proposition}
\newtheorem{corollary}[claim]{Corollary}
\newtheorem{remark}[claim]{Remark}
\newtheorem{definition}[claim]{Definition}
\newtheorem{example}[claim]{Example}

\def\proclaim #1. #2\par{\medbreak
\noindent{\bf#1.\enspace}{\sl#2}\par\medbreak}
\makeatother

\begin{document}

\title{Rigged modules II: multipliers and duality}

\author{David P. Blecher} 
\address{Department of Mathematics, University of Houston, Houston, TX  77204-3008}
\email{dblecher@math.uh.edu}



\subjclass[2010]{Primary  47L30, 47L45, 46L08; Secondary 16D90, 47L25}

\keywords{$W^{*}$-algebra; Hilbert $C^*$-module; Operator algebra; Dual operator algebra;
 $W^*$-module; weak*-rigged module; Morita equivalence; von Neumann algebra}

\begin{abstract}
In  a previous paper with Kashyap  we generalized the theory of $W^*$-modules to the setting of modules over nonselfadjoint dual
operator algebras on a Hilbert space, obtaining the class of weak$^*$-rigged modules.  The present paper and its contemporaneous predecessor comprise  the sequel which we
promised at that time would be forthcoming.   We give many new results about rigged and weak$^*$-rigged modules and their tensor products,
such as an Eilenberg-Watts type theorem.
\end{abstract}

\maketitle

\section{Introduction}   {\em Rigged modules} over a (nonselfadjoint) 
operator algebra are the generalization from \cite{DB2,BMP} of the important  class of modules over
$C^*$-algebras known as {\em Hilbert $C^*$-modules}.
A {\em $W^*$-module} is a Hilbert $C^*$-module over a von Neumann algebra which is `selfdual' (see e.g.\ \cite{Pas,Bsd}), or equivalently 
which has a Banach space predual (a result of Zettl, see e.g.\ \cite[Corollary 3.5]{BM} for one proof of this).  
 The {\em weak$^*$-rigged} or  $w^*$-{\em rigged modules}, introduced in \cite{BK2} (see also \cite{BKraus}; or \cite[Section 5]{BM} for an earlier variant), are a generalization of $W^*$-modules to the setting of modules over a (nonselfadjoint) dual
operator algebra.    By the latter term we mean a unital weak* closed
algebra of operators on a Hilbert space.

In \cite{BK2}  we generalized basic aspects of the theory of $W^*$-modules, and this may be seen also as the weak* variant of the theory of rigged modules from \cite{DB1} (see also \cite{BMP}). 
 The present paper and its contemporaneous predecessor comprise  the sequel which we 
promised at the time of \cite{BK2} would be forthcoming.  In the present paper we discuss rigged modules and `correspondences' using the concept of the operator space left multiplier algebra of $Y$ in the sense of 
{\rm \cite[Section 4.5]{BEZ}}.   We also discuss a  connection between rigged and 
weak$^*$-rigged modules, the exterior tensor product, orthogonally  complemented submodules, and other topics such as an Eilenberg-Watts type theorem characterizing
functors between categories of rigged or weak* rigged modules.     

In the course of this work 
we noticed several things that were 
 missed, or not stated (or proved), or which could be simplified,
from the time the earlier work on rigged and weak$^*$-rigged modules
was done.   We take the opportunity to correct/present/simplify these things here.

Turning to background, we will use the notation from \cite{BK1,BK2,UK1,BK3}, and perspectives from \cite{DB2}. We will assume that the
reader is familiar with basic notions from operator space theory which may be found in any current text on that subject. The reader may
consult \cite{DBbook} as a reference for any other unexplained terms
here. We assume that the reader is familiar with basic Banach space and operator space duality principles such as the Krein-Smulian Theorem. 
We often abbreviate `weak$^*$' to `$w^*$'.    
 A right dual operator $M$-module is a nondegenerate $M$-module $Y$, which is also a dual operator space, such that the module action is completely contractive and separately weak* continuous. We use standard notation for module mapping spaces; e.g.\ $CB(X, N)_N$ (resp.\ $CB^\sigma(X, N)_N$) are the completely bounded (resp.\ and weak* continuous) right $N$-module maps from $X$ to $N$.  We often use the {\em normal module Haagerup tensor product} $Y \otimes^{\sigma h}_{M} Z$, and its universal property from \cite{EP}, which loosely says that it ‘linearizes 
completely contractive $M$-balanced separately weak* continuous bilinear maps' (balanced means that 
$u(xa,y) = u(x,ay)$ for $a \in M$).  We assume that the reader is familiar with the notation and facts about this tensor product from \cite[Section 2]{BK1}.    Although we shall not use it here, in passing we remark that the module tensor product facts in that 
section work even without assuming all of the constituents of the definition of $Y$ being a dual operator $M$-module,
so long as it is a dual operator space and an $M$-module.   For any operator space $X$ we write $C_n(X)$ for the column space of $n \times 1$ matrices with entries in $X$, with its canonical norm from operator space theory.

\begin{definition} \cite{BK2} \label{wrig} Suppose that $Y$ is a dual operator
space and a right module over a dual operator algebra $M$. Suppose
that there exists a net of positive integers $(n(\alpha))$, and
$w^*$-continuous completely contractive $M$-module maps
$\phi_{\alpha} : Y \to C_{n(\alpha)}(M)$ and $\psi_{\alpha} :
C_{n(\alpha)}(M) \to Y$, with $\psi_{\alpha}( \phi_{\alpha}(y))$ converging
to $y$ in the weak* topology on $Y$, for all $y \in Y$.   Then we say
that $Y$ is a {\em right $w^*$-rigged module} (or {\em weak$^*$-rigged module}) over $M$.
\end{definition}

We remark that the fact that $w^*$-rigged modules are dual operator modules seems not to have been proved  in the development in Section 2 and the 
start of Section 3 in \cite{BK2}
but seemingly assumed in the proof.   We give a proof of this early on \cite{BK3}.

As on p.\ 348 of \cite{BK2}, the operator space structure of a 
$w^*$-rigged module $Y$ over  $M$ is determined by
$\Vert [y_{ij}] \Vert_{M_n(Y)}
= \sup_\alpha \; \Vert [\phi_{\alpha}(y_{ij})] \Vert$ for $[y_{ij}] \in M_n(Y).$

The {\em 
rigged modules} of {\rm \cite{DB2}} may be defined similarly 
to Definition \ref{wrig}, but with the words `dual' and `$w^*$-continuous'
removed, and the weak* topology replaced by the norm topology, and $M$ now an
approximately unital operator algebra.   
This simpler reformulation of the definition of a rigged module, and its equivalence
with the definitions in \cite{DB2}, may be found in \cite[Section 3]{BHN}.   The 
operator space structure of a rigged module $Y$ over  $M$ is determined by the same formula
as at the end of the last paragraph, but for the appropriate $\phi_{\alpha}$ in this case.
See \cite{DB2} for details.

We say that $w^*$-rigged modules are {\em unitarily isomorphic} if there exists a completely 
isometric surjective weak* homeomorphic module map between them.   Similarly for
rigged modules, with of course `weak* homeomorphic' dropped.

Every right $w^*$-rigged module  (resp.\ right rigged module) $Y$ over $M$ gives rise to a canonical left $w^*$-rigged (resp.\ left rigged module) $M$-module
$\tilde{Y}$, and a pairing $(\cdot, \cdot) : \tilde{Y} \times Y \to M$ (see \cite{BK2,DB2}).
Indeed in the $w^*$-rigged  case, $\tilde{Y}$ turns out to be completely isometric to $CB^{\sigma}(Y,M)_{M}$ as dual operator 
$M$-modules, together with its canonical pairing with $Y$.   We also have $\widetilde{\tilde{Y}} = Y$.
The morphisms between $w^*$-rigged $M$-modules are the {\em adjointable} $M$-module maps \cite{BK2,BK3}, which 
turn out to coincide with the 
weak* continuous completely bounded  $M$-module maps (see \cite[Proposition 3.4]{BK2}).   
We write $\mathbb{B}(Z,W)$ for the weak* continuous completely bounded  $M$-module maps
from a  $w^*$-rigged $M$-module $Z$ into a dual operator $M$-module $W$, with as
usual $\mathbb{B}(Z) = \mathbb{B}(Z,Z)$.     We also use this notation for the {\em adjointable maps}
between rigged modules \cite{DB2}.     We write $\mathbb{K}(Y)_A$ for the  so-called {\em compact adjointable}
right $A$-module maps on a right $A$-rigged module $Y$; namely the closure of the 
span of the 
maps on $Y$ of form $y \mapsto y' (x,y)$ for some  $y' \in Y$ and $x \in \tilde{Y}$ (see {\rm \cite{DB2}}).

\section{Rigged modules, multipliers, correspondences, and duality}

The following important facts about rigged modules 
do not appear to be in the literature:

\begin{lemma} \label{notin}
If $Y$ is a rigged module over an
operator algebra $A$, viewed as an operator space, and if ${\mathcal M}_{\ell}(Y)$ is the 
operator space left multiplier algebra of $Y$ in the sense of 
{\rm \cite[Section 4.5]{BEZ}}, then 
${\mathcal M}_{\ell}(Y) = CB(Y)_A$ completely isometrically isomorphically.
This also equals the left multiplier algebra of 
$\mathbb{K}(Y)_A$, where the latter is the compact adjointable
maps on $Y$.
\end{lemma}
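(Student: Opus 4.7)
The plan is to factor the lemma through the operator algebra $B := \mathbb{K}(Y)_A$. By \cite{DB2}, $B$ is an approximately unital operator algebra (convex hulls of $\psi_\alpha \circ \phi_\alpha$ furnish a contractive approximate identity $(e_\lambda)$), and $Y$ is a nondegenerate left operator $B$-module, i.e.\ $e_\lambda \cdot y \to y$ in norm for every $y \in Y$. The key external input is the fact from \cite[Section 4.5]{BEZ} (see also \cite{DBbook}) that for any approximately unital operator algebra $B$, the operator space left multiplier algebra ${\mathcal M}_{\ell}(B)$ coincides completely isometrically with the classical left multiplier algebra $LM(B)$. Thus it suffices to establish, completely isometrically, (i) $CB(Y)_A \cong LM(B)$ and (ii) ${\mathcal M}_{\ell}(Y) \cong {\mathcal M}_{\ell}(B)$.

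For (i), send $T \in CB(Y)_A$ to $L_T : K \mapsto T \circ K$; this lands in $B$ because $T$ carries a rank-one operator $y' \otimes x$ (the map $y \mapsto y'(x,y)$) to $Ty' \otimes x$. The assignment $T \mapsto L_T$ is a complete contraction into $LM(B)$. For the inverse, given $L \in LM(B)$ put $T_L(y) := \lim_\lambda L(e_\lambda)(y)$; the limit exists because $L(e_\lambda) e_\mu = L(e_\lambda e_\mu)$ together with nondegeneracy of the $B$-action on $Y$ force the net to be Cauchy, $T_L$ is right $A$-linear since each $L(e_\lambda) \in B$ is, and the amplified estimate $\|T_L^{(n)}\| \le \sup_\lambda \|L(e_\lambda)\|_{M_n(B)} \le \|L\|_{cb}$ gives complete contractivity of $L \mapsto T_L$.

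For (ii), one direction is essentially automatic: any $S \in {\mathcal M}_{\ell}(Y)$ commutes with every right operator space multiplier of $Y$, and because $Y$ is an operator $A$-module the right $A$-action embeds $A$ into ${\mathcal M}_r(Y)$, so $S \in CB(Y)_A$ and by (i) corresponds to some $L_S \in LM(B) = {\mathcal M}_{\ell}(B)$; the map $S \mapsto L_S$ is clearly a complete contraction. Conversely, given $L \in {\mathcal M}_{\ell}(B) = LM(B)$, the map $T_L$ from (i) is a left operator space multiplier of $Y$: use a completely isometric nondegenerate representation of the linking operator algebra $\mathcal{L}(Y)$ (the $2 \times 2$ operator algebra built from $B$, $Y$, $\tilde{Y}$, $A$) on a Hilbert space $H_1 \oplus H_2$; then $Y$ embeds completely isometrically into $B(H_2, H_1)$ as a left $B$-submodule with $B$ acting by honest left multiplication, so any $L \in LM(B) \subseteq B(H_1)$ left-multiplies $Y$ into itself, which is the defining condition for an operator space left multiplier.

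The main obstacle is the converse direction of (ii): producing the equivariant embedding of $Y$ into some $B(H,K)$ that realizes the $B$-action as genuine left multiplication by an operator on $K$. This is precisely where the rigged hypothesis is used, via the linking operator algebra and its standard representation, and it is also the step where one must carefully match cb-norms on both sides so that the resulting identifications are completely isometric rather than merely completely bounded.
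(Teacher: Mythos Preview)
Your proof is correct, and for part (i) (the identification $CB(Y)_A \cong LM(\mathbb{K}(Y)_A)$) it is essentially the paper's argument made explicit: the paper simply cites that $\mathbb{K}(Y)_A$ is a left ideal in $CB(Y)_A$ for one direction and quotes 3.1.11 of \cite{DBbook} (the extension of a nondegenerate module action to the left multiplier algebra) for the other, which is exactly your limit $T_L(y) = \lim_\lambda L(e_\lambda)(y)$.

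The genuine difference is in the converse inclusion $CB(Y)_A \subseteq {\mathcal M}_\ell(Y)$. You exhibit each $T_L$ concretely as a left multiplier by passing through a completely isometric nondegenerate representation of the linking operator algebra $\mathcal{L}(Y)$, so that $Y \hookrightarrow B(H_2,H_1)$ and $LM(B) \hookrightarrow B(H_1)$ compatibly. This works, but it leans on the structure theory of $\mathcal{L}(Y)$ from \cite{DB2} (existence of a diagonal cai, so that the representation splits as $H_1 \oplus H_2$ with $B$ acting nondegenerately on $H_1$). The paper avoids all of this by invoking the universal property of ${\mathcal M}_\ell(Y)$ directly: once (i) is known, $Y$ is a left operator $CB(Y)_A$-module, and then \cite[Theorem 4.6.2]{DBbook} immediately gives a completely contractive homomorphism $CB(Y)_A \to {\mathcal M}_\ell(Y)$ which is the identity on underlying maps. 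In effect you are reproving, via the linking algebra, a special case of that theorem; the paper's route is shorter and requires less rigged-module-specific machinery, while yours has the virtue of making the multiplier structure visible in a concrete Hilbert-space picture. Your detour through ${\mathcal M}_\ell(B) = LM(B)$ is not actually needed: your argument already shows $LM(B) \to {\mathcal M}_\ell(Y)$ is completely contractive, which combined with (i) and the easy inclusion ${\mathcal M}_\ell(Y) \subseteq CB(Y)_A$ closes the loop.
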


\begin{proof} 
It is known (see e.g.\ \cite[Theorem 3.6]{DB2}) that $\mathbb{K}(Y)_A$ is a left ideal in
$CB(Y)_A$.  This gives a map $CB(Y)_A \to LM(\mathbb{K}(Y)_A)$.
Conversely, since $Y$ is a left operator 
$\mathbb{K}(Y)_A$-module (by the same cited theorem), it is a
left operator $LM(\mathbb{K}(Y)_A)$-module by 3.1.11 in \cite{DBbook}.
Hence we obtain a completely contractive homomorphism 
$LM(\mathbb{K}(Y)_A) \to CB(Y)_A$.  It is easy to argue that these
maps are mutual inverses, so that $CB(Y)_A \cong
LM(\mathbb{K}(Y)_A)$ completely isometrically isomorphically.  (This argument may have originally
been due to Paulsen.)  

By facts in the theory of operator space multipliers (see e.g.\ \cite[Theorem 4.5.5]{DBbook}), 
 the `identity map'  is a completely contractive homomorphism ${\mathcal
M}_{\ell}(Y) \to CB(Y)$.   This maps into $CB(Y)_A$, since for example right 
multiplication by $a \in A$ is easily seen to be  a right operator space multiplier, and left and right 
 operator space multipliers commute (see 4.5.6 in \cite{DBbook}).     From \cite{DB2}   
we know that  $CB(Y)_A$ is an operator algebra. 
Also, by the last paragraph $Y$ is a left operator
 $CB(Y)_A$-module (with the canonical action).
By the  operator space multiplier theory (see e.g.\ \cite[Theorem 4.6.2 (1) and (2)]{DBbook})
there exists a completely contractive
homomorphism $\pi : CB(Y)_A \to {\mathcal M}_{\ell}(Y)$ with
$\pi(T)(y) = T(y)$ for all $y \in Y, T \in CB(Y)_A$.  That is,
$\pi(T) = T$.  Thus $CB(Y)_A = {\mathcal M}_{\ell}(Y)$.  
\end{proof}  

The last result should have many consequences.  In the remainder of this section we give several.

\begin{corollary}  \label{lms}   For any orthogonally complemented (in the sense of {\rm \cite[Section 7]{DB2}}) submodule $W$ of 
a rigged module $Y$ over an
operator algebra $A$,  there is
a unique contractive linear projection from $Y$ onto $W$.  
The right $M$-summands 
in the sense of
{\rm \cite{BEZ}} (see also {\rm \cite[Sections 4.5 and 4.8]{DBbook}})
in  such $Y$ 
   are precisely the orthogonally complemented submodules of $Y$.
 \end{corollary}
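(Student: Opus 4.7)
The plan is to translate the corollary through Lemma \ref{notin}, which identifies $CB(Y)_A$ with the operator space left multiplier algebra ${\mathcal M}_{\ell}(Y)$, together with standard right $M$-structure theory for operator spaces.

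Recall that for a rigged $A$-module $Y$ the adjointable $A$-module maps $\mathbb{B}(Y)$ coincide with $CB(Y)_A$ (a known fact from \cite{DB2}); at the very least, a contractive adjointable idempotent is automatically in $CB(Y)_A$. An orthogonally complemented submodule $W \subseteq Y$ in the sense of Section 7 of \cite{DB2} is by definition the range of a contractive idempotent $P \in \mathbb{B}(Y)$. Via the identifications $\mathbb{B}(Y) \subseteq CB(Y)_A = {\mathcal M}_{\ell}(Y)$ supplied by Lemma \ref{notin}, such a $P$ becomes a contractive idempotent in the left multiplier algebra, which by definition is a right $M$-projection on $Y$ in the sense of \cite{BEZ}. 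Hence $W = P(Y)$ is a right $M$-summand. Conversely, if $W$ is a right $M$-summand with associated right $M$-projection $P \in {\mathcal M}_{\ell}(Y)$, the same chain of identifications puts $P$ inside $CB(Y)_A$, and (by the matching of adjointable and completely bounded module maps for rigged modules) inside $\mathbb{B}(Y)$, so that $W = P(Y)$ is orthogonally complemented.

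For the uniqueness clause I would invoke the standard fact from right $M$-structure theory (see e.g.\ \cite[Section 4.8]{DBbook}) that a right $M$-projection is the unique contractive linear projection onto its range; this is a consequence of the $2 \times 2$ column-matrix norm characterization of right $M$-projections on an operator space, and propagates the Banach-space uniqueness of $M$-projections onto $M$-summands into the operator space setting. Applied to $W$ this immediately gives the uniqueness of the contractive linear projection from $Y$ onto $W$.

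The main obstacle I anticipate is bureaucratic rather than conceptual: one has to verify cleanly that the definition of orthogonal complementation from Section 7 of \cite{DB2} really is equivalent to contractive idempotency in ${\mathcal M}_{\ell}(Y)$ (the identification $\mathbb{B}(Y) = CB(Y)_A$ for rigged modules is the point requiring most care), and that the uniqueness of contractive projections is quoted in its correct operator space form. Everything else follows formally from Lemma \ref{notin}.
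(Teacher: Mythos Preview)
Your argument is essentially the paper's own: use Lemma \ref{notin} to identify $CB(Y)_A$ with ${\mathcal M}_{\ell}(Y)$, so that (completely) contractive idempotent module maps on $Y$ coincide with orthogonal projections in ${\mathcal M}_{\ell}(Y)$, i.e.\ with left $M$-projections (your text says ``right $M$-projection'', a terminological slip), whose ranges are exactly the right $M$-summands; uniqueness is then the standard $M$-structure fact from \cite{BEZ}. One caution: the claimed ``matching of adjointable and completely bounded module maps for rigged modules'' is not valid in general ($\mathbb{B}(Y)$ can be strictly smaller than $CB(Y)_A = LM(\mathbb{K}(Y)_A)$), but you do not actually need it---the paper works directly with completely contractive idempotents in $CB(Y)_A$, and a contractive idempotent in the unital operator algebra ${\mathcal M}_{\ell}(Y)$ is automatically an orthogonal projection, which is all that is required.
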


\begin{proof} 
 The orthogonal projections in ${\mathcal M}_{\ell}(Y)$, which 
by the previous result are the completely contractive idempotents
 in $CB(Y)_M$,
 are the left $M$-projections on $Y$ by \cite{BEZ},
and the right $M$-summands are their ranges.
 These ranges are just the orthogonally complemented submodules.

 The first assertion  is a general
fact about right $M$-summands of an operator space from \cite{BEZ}. 
\end{proof}

An early  prototype of the 
$w^*$-rigged  modules appeared in  \cite[Section 5]{BM}.
 We now connect these two notions.
Some examples of the modules characterized here may be found 
e.g.\ in \cite[p.\ 405]{BM}.    For example every $W^*$-module (defined in the first line of our 
paper) satisfies these conditions.  

\begin{theorem}  \label{bmex}
Let $Y$ be a rigged module over
a dual operator algebra $M$. 
Suppose that 
$Y$ has a predual operator space, and that $(x,\cdot)$ is weak* continuous
for all $x \in \tilde{Y}$.
  Then $Y$ is a $w^*$-rigged module,
and $Y$ is self-dual (that is,
$CB(Y,M)_M \cong \tilde{Y}$ via the canonical map), and 
$CB(Y)_M = CB^\sigma(Y)_M = \mathbb{B}(Y)_M$.
Thus $Y$ belongs to the class of modules considered
in Lemma {\rm 5.1} and Corollaries {\rm 5.2} 
and {\rm 5.5} in {\rm \cite{BM}}, and therefore 
satisfies all the conclusions of those results.
   \end{theorem}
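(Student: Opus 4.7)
The plan is to verify Definition \ref{wrig} for $Y$ by upgrading the given rigged data to $w^*$-continuous maps, and then to extract self-duality and the equalities of module-map classes from the resulting $w^*$-rigged structure together with \cite{BK2} and a weak*-compactness argument carried out in $\tilde Y$.

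Let $\phi_\alpha : Y \to C_{n(\alpha)}(M)$ and $\psi_\alpha : C_{n(\alpha)}(M) \to Y$ be the rigged maps, so $\psi_\alpha\phi_\alpha(y)\to y$ in norm. Each of the $n(\alpha)$ components of $\phi_\alpha$ is a cb right $M$-module map $Y\to M$ and, in the rigged construction of \cite{DB2}, canonically lies in $\tilde Y$; the hypothesis therefore makes every component, and hence $\phi_\alpha$ itself, $w^*$-continuous. The heart of the argument is to show that $Y$ is a dual operator $M$-module, for then $\psi_\alpha(c_1,\dots,c_{n(\alpha)}) = \sum_i y_i^\alpha c_i$ (with $y_i^\alpha := \psi_\alpha(e_i)\in Y$) is $w^*$-continuous and $Y$ is $w^*$-rigged. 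By Krein-Smulian, separate $w^*$-continuity of the action need only be checked on bounded sets. Suppose $y_\beta \to y$ weak* in a ball of $Y$ and fix $m \in M$; $w^*$-continuity of $\phi_\alpha$ together with $w^*$-continuity of diagonal right multiplication by $m$ on $C_{n(\alpha)}(M)$ gives
\[
\phi_\alpha(y_\beta m) = \phi_\alpha(y_\beta)\,m \xrightarrow{w^*} \phi_\alpha(y)\,m = \phi_\alpha(ym).
\]
The bounded set $\{y_\beta m\}\subset Y$ has $w^*$-limit points by Banach-Alaoglu (using the predual of $Y$); any such $z$ satisfies $\phi_\alpha(z)=\phi_\alpha(ym)$ for every $\alpha$, and applying $\psi_\alpha$ and taking the norm limit via $\psi_\alpha\phi_\alpha\to\mathrm{id}_Y$ pointwise forces $z = ym$. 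Thus $y_\beta m\to ym$ weak*, and a symmetric argument handles $m\mapsto ym$. This Krein-Smulian step, which extracts separate $w^*$-continuity of the module action from norm approximation of the identity, is the main technical obstacle.

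With $Y$ now $w^*$-rigged, \cite{BK2} gives $\tilde Y\cong CB^\sigma(Y,M)_M$ and $CB^\sigma(Y)_M=\mathbb{B}(Y)_M$. For self-duality, take $T\in CB(Y,M)_M$. Since $T\circ\psi_\alpha$ acts on $C_{n(\alpha)}(M)$ as the $w^*$-continuous row $(c_i)\mapsto\sum_i T(y_i^\alpha)c_i$, the composites $T_\alpha:=T\circ\psi_\alpha\circ\phi_\alpha$ lie in $CB^\sigma(Y,M)_M=\tilde Y$, satisfy $\|T_\alpha\|_{\tilde Y}\leq \|T\|$, and converge to $T$ pointwise in norm. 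Since $\tilde Y$ is a dual space, Banach-Alaoglu yields a $w^*$-limit point $T^\infty\in\tilde Y$ of $\{T_\alpha\}$; separate $w^*$-continuity of the $\tilde Y\times Y\to M$ pairing makes evaluation at each $y\in Y$ a $w^*$-continuous map $\tilde Y\to M$, so $T^\infty(y)$ equals the norm limit $T(y)$, whence $T=T^\infty\in\tilde Y$. Thus $CB(Y,M)_M=CB^\sigma(Y,M)_M=\tilde Y$. For $T\in CB(Y)_M$ and $x\in\tilde Y$ the composite $(x,T(\cdot))\in CB(Y,M)_M=CB^\sigma(Y,M)_M$ is $w^*$-continuous; combining this with non-degeneracy of the pairing and $w^*$-compactness of bounded sets in $Y$ forces $T$ itself to be $w^*$-continuous, giving $CB(Y)_M=CB^\sigma(Y)_M=\mathbb{B}(Y)_M$. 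That $Y$ satisfies the defining conditions of the class considered in \cite[\S5]{BM} is then immediate, so the conclusions of \cite[Lemma 5.1, Corollaries 5.2, 5.5]{BM} apply.
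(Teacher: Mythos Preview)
Your argument is correct, but it takes a genuinely different route from the paper's. The paper's proof is organized around Lemma~\ref{notin}: since $CB(Y)_M=\mathcal{M}_\ell(Y)$ and left operator-space multipliers on any dual operator space are automatically weak* continuous \cite[Theorem~4.7.1]{DBbook}, one obtains $CB(Y)_M=CB^\sigma(Y)_M$ immediately, \emph{before} establishing that $Y$ is $w^*$-rigged. The paper then proves only that $m\mapsto ym$ is weak* continuous (enough to make $\psi_\alpha$ weak* continuous), and deduces self-duality by applying the identity $CB(\,\cdot\,)_M=CB^\sigma(\,\cdot\,)_M$ to the rigged module $Y\oplus^c M$ and reading off the off-diagonal corner $CB(Y,M)_M=CB^\sigma(Y,M)_M$. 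You bypass the multiplier machinery entirely and reverse the order: you first show $Y$ is $w^*$-rigged by a direct Krein--Smulian/compactness argument, then prove self-duality by approximating an arbitrary $T\in CB(Y,M)_M$ by the weak*-continuous $T_\alpha=T\psi_\alpha\phi_\alpha$ and extracting a weak* limit in $\tilde Y$, and only at the end recover $CB(Y)_M=CB^\sigma(Y)_M$ from self-duality and non-degeneracy of the pairing. Your proof is more elementary and self-contained; the paper's is shorter and is meant precisely to exhibit Lemma~\ref{notin} in action, with the $Y\oplus^c M$ trick giving self-duality essentially for free. A minor remark: for $\psi_\alpha$ to be weak* continuous you only need continuity of $m\mapsto ym$, so your verification of the $y$-variable is harmless but not needed at that stage.
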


\begin{proof}   
By Lemma \ref{notin} and 
because left multipliers on a dual space 
are known to be weak* continuous \cite[Theorem 4.7.1]{DBbook},
 we have $$CB(Y)_M = {\mathcal M}_{\ell}(Y) = CB^\sigma(Y)_M .$$
Given a bounded net $m_t \to m$ weak* in $M$, suppose that a subnet
$y m_{t_{\nu}} \to y'$ weak* in $Y$.   Then 
$(x,y m_{t_{\nu}}) \to (x,y')$ for all $x \in \tilde{Y}$.  However it also converges to $(x,y)m$, and so
$(x,y'-ym) = 0$.   It follows that $y' = ym$, so that by topology $y m_t \to ym$ weak*.
So the map  $m \mapsto ym$ is  weak* continuous by the Krein-Smulian theorem.  

It follows that in the definition of $Y$ being a
rigged module (below Definition \ref{wrig})
we may assume that the maps $\phi_{\alpha}, \phi_{\alpha}$
are weak* continuous.   Indeed in \cite{DB2} the `coordinates'
of $\phi_{\alpha}$ are usually assumed to be of the form $(x,\cdot)$ for $x \in \tilde{Y}$,
hence are weak* continuous.   For $\psi_{\alpha}$
this follows from the fact proved in the last paragraph.  So $Y$ is a $w^*$-rigged module.

Applying the relation at the end of the first paragraph of the proof
 to the direct column sum
$Y \oplus^c M$  (see \cite{BK2}) we have
$CB(Y \oplus^c M)_M$ equal to $CB^\sigma(Y \oplus^c M)_M$,
from which it is clear that $$CB(Y,M)_M
= CB^\sigma(Y,M)_M \cong \tilde{Y}.$$   So $Y$ is self-dual.  
The other conclusions are easy.
    \end{proof}

The last result may be viewed as a `nonselfadjoint variant' of the result of 
Zettl mentioned in the first lines of the paper.

\begin{remark}    {\rm   The condition in the theorem that $(x,\cdot)$ is weak* continuous
may be automatic, although to get this one may need to assume that 
$b \mapsto yb$ is weak* continuous on $M$ for each $y \in Y$.   We were able to show without this
$(x,\cdot)$ condition
that  $Y \otimes_{hM} H^c$  and $Y \otimes^{\sigma h}_M H^c$ are Hilbert spaces, and if these 
two spaces coincide then  the conclusions of the theorem hold.   We were also able to 
prove the theorem with the weak* continuity assumption on $(x,\cdot)$ replaced by the 
weak* continuity of  $b \mapsto yb$ condition,  if $M$ acts faithfully 
on the right on $Y$ (that is there is an unique $b \in M$ with $Y b = 0$).   To see this, let 
$f \in CB(Y,M)_M$, let $y_t \to y$ be a bounded weak* convergent net in $Y$,
 and let $y_0 \in Y$ be fixed.
By the first  paragraph of the proof the map $y \mapsto y_0 \, f(y)$ is weak* continuous on $Y$, so 
$y_0 \, f(y_t) \to y_0 \, f(y)$.   Suppose that we have a weak* convergent subnet
$f(y_{t_\nu}) \to b$ in $M$.   Then $y_0 \, f(y_{t_\nu}) \to y_0 \, b$ weak*.
Thus $y_0 \, b = y_0 \, f_k(y)$ for all  $y_0 \in Y$, and we deduce that $b = f_k(y)$.   By topology
$f(y_t) \to f_k(y)$ weak*.    Hence by  the Krein-Smulian theorem
$f$ is weak* continuous.    It follows as in the proof that 
we may assume that the maps $\phi_{\alpha}, \phi_{\alpha}$
are weak* continuous, and $Y$ is $w^*$-rigged.   Hence 
$CB(Y,M)_M = CB^\sigma(Y,M)_M \cong \tilde{Y}$, so $Y$ is self-dual, and we may continue as before.}
\end{remark}

Recall that an {\em approximately unital operator algebra}
is one which has a contractive approximate identity.
 
\begin{theorem}  \label{ncor}  Suppose that $A, B$ are 
 approximately unital operator algebras, and that $Y$ is a 
right rigged $B$-module which is a nondegenerate
left $A$-module via a homomorphism $\theta : A \to \mathbb{B}(Y)_B
= M(\mathbb{K}(Y)_B)$.
Then with this action  $Y$ is a left operator $A$-module if and only if 
$\theta$ is completely contractive.    If these hold then 
 $\theta$ is essential in the sense of {\rm \cite[p.\ 400-401]{DB2}}. 
 In particular, there is a contractive approximate identity $(e_t)$
for $A$ with $e_t y \to y$ and $x e_t \to x$ for all $y \in Y, x \in \tilde{Y}$.
\end{theorem}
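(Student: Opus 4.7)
The plan is to handle the iff equivalence first, then derive essentiality, and finally extract the approximate-identity statements.

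For the iff, I would combine Lemma \ref{notin} with the extension result \cite[3.1.11]{DBbook}. In the $(\Leftarrow)$ direction, $\mathbb{K}(Y)_B$ is an approximately unital operator algebra acting completely contractively and nondegenerately on $Y$ from the left (part of the rigged structure), so by \cite[3.1.11]{DBbook} this extends canonically to a completely contractive left operator $M(\mathbb{K}(Y)_B) = \mathbb{B}(Y)_B$-module structure on $Y$; composing with $\theta$ gives the desired left operator $A$-module structure. In the $(\Rightarrow)$ direction, the completely contractive bilinear action $A \times Y \to Y$ induces a completely contractive homomorphism $A \to CB(Y)$ whose image lies in $CB(Y)_B = \mathcal{M}_\ell(Y)$ by Lemma \ref{notin} and in $\mathbb{B}(Y)_B \subseteq CB(Y)_B$ by hypothesis; hence $\theta$ is completely contractive.

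For essentiality I would show $\theta(A)\,\mathbb{K}(Y)_B$ has dense linear span in $\mathbb{K}(Y)_B$. Using that $\mathbb{K}(Y)_B$ is the closed linear span of the rank-one maps $T_{y_0,x_0} : y \mapsto y_0(x_0, y)$ and that $\theta(a)\, T_{y_0,x_0} = T_{\theta(a) y_0,\, x_0}$, the nondegeneracy hypothesis $\overline{\mathrm{span}}\,\theta(A)Y = Y$ lets us approximate each $T_{y_0, x_0}$ by elements $\sum_i T_{\theta(a_i) y_i,\, x_0} \in \theta(A)\,\mathbb{K}(Y)_B$.

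Given a cai $(e_t)$ of $A$ and the essentiality just obtained, the standard bounded-net argument (reducing to $K$ of the form $\theta(a) K'$ and using $\theta(e_t a) \to \theta(a)$ in norm) yields $\theta(e_t) K \to K$ in norm for every $K \in \mathbb{K}(Y)_B$; applied to $K = T_{y_0, x_0}$ this forces $e_t y_0 \to y_0$ in $Y$. For the dual statement $x_0 e_t \to x_0$ in $\tilde{Y}$, the parallel argument requires $K \theta(e_t) \to K$ in norm, which is the main obstacle: left-nondegeneracy on $Y$ produces only the left-strict version of this convergence. I would obtain the right-strict version by first establishing right-nondegeneracy, $\overline{\mathrm{span}}\,\tilde{Y} \theta(A) = \tilde{Y}$, via the adjoint isomorphism $\mathbb{B}(Y)_B \cong \mathbb{B}(\tilde{Y})$ given by $T \mapsto T^*$ (which transports left-density on $Y$ to right-density on $\tilde{Y}$ through the defining pairing $(x\theta(a), y) = (x, \theta(a) y)$), and then invoking the rank-one computation $T_{y_0, x_0} \theta(e_t) = T_{y_0,\, x_0 e_t}$ together with the usual norm estimate for rank-one operators in $\mathbb{K}(Y)_B$ to extract $\|x_0 e_t - x_0\|_{\tilde{Y}} \to 0$.
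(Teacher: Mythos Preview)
Your treatment of the iff is fine and close in spirit to the paper's, which also routes the equivalence through Lemma~\ref{notin} and the identification of left operator module actions with completely contractive homomorphisms into ${\mathcal M}_\ell(Y)$.

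The gap is in the second half, specifically the passage from left-nondegeneracy on $Y$ to right-nondegeneracy on $\tilde Y$. Your proposed mechanism---``the adjoint isomorphism $\mathbb{B}(Y)_B \cong \mathbb{B}(\tilde Y)$ transports left-density to right-density via the pairing $(x\theta(a),y)=(x,\theta(a)y)$''---does not do what you want. From $\theta(e_t)y\to y$ in norm you only get $(x\theta(e_t),y)\to(x,y)$ for each $y$, i.e.\ a weak-type convergence of $x\theta(e_t)$ through the pairing; this is exactly the familiar obstruction that strong (point-norm) convergence of a bounded net of operators need not pass to adjoints. Nothing in the adjoint isomorphism upgrades that pointwise-in-the-pairing convergence to norm convergence in $\tilde Y$, nor does it yield $\overline{\mathrm{span}}\,\tilde Y\,\theta(A)=\tilde Y$. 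Consequently your derivation of $K\theta(e_t)\to K$ (and hence of $x_0 e_t\to x_0$) is unsupported, and so is the two-sided form of essentiality.

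The paper resolves this asymmetry by passing to the bidual: embed $M(\mathbb{K}(Y)_B)\subset(\mathbb{K}(Y)_B)^{**}$, use left-nondegeneracy to see that any weak* limit point of $(\theta(e_t))$ is a left identity of $(\mathbb{K}(Y)_B)^{**}$ and hence equals the identity $1$, so $\theta(e_t)\to 1$ weak*. This immediately gives $z\theta(e_t)\to z$ weak*, hence weakly in $\mathbb{K}(Y)_B$, and then Mazur's theorem produces convex combinations---a \emph{new} contractive approximate identity for $A$---for which the convergence is in norm on both sides. That convexification step is the missing idea; note also that the conclusion of the theorem is the existence of \emph{some} cai with the stated properties, not that an arbitrary cai works, which is precisely what the Mazur argument delivers.
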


\begin{proof}   The first assertion may be seen for example from Lemma \ref{notin} and the 
fact that the left operator $A$-module actions on 
$Y$ are in bijective correspondence with the completely contractive homomorphisms
into ${\mathcal M}_{\ell}(Y)$ which give a nondegenerate left module action on $Y$.   The one direction of this
 follows from e.g.\ \cite[Theorem 4.6.2 (1) and (2)]{DBbook}.   The other direction follows from 
3.1.12 in \cite{DBbook} and the fact that  any operator space $Y$ is  
a left operator ${\mathcal M}_{\ell}(Y)$-module (by \cite[Theorem 4.5.5]{DBbook}).

Viewing $M(\mathbb{K}(Y)_B) \subset (\mathbb{K}(Y)_B)^{**}$,
we have that $\theta$ extends uniquely to a 
completely contractive homomorphism $\tilde{\theta} : 
A^{**} \to (\mathbb{K}(Y)_B)^{**}$ by 2.5.5 in \cite{DBbook}.
Since $\theta(e_t) z \to z$ for all $z \in \mathbb{K}(Y)_B$ and
contractive approximate identity $(e_t)$ of $A$, it follows that any weak* limit point $\eta$ of $(\theta(e_t))$ satisfies $\eta z = z$.  So $\eta$ is
a left identity for $(\mathbb{K}(Y)_B)^{**}$, hence equals the
identity $1$ for that algebra (see \cite[Proposition 2.5.8]{DBbook}).  
 So $\theta(e_t) \to 
1$ weak*, by topology.   Then $z \theta(e_t) \to z$ weak*  in $ \mathbb{K}(Y)_B^{**}$ for $z \in \mathbb{K}(Y)_B$, and hence 
weakly in $\mathbb{K}(Y)_B$ (note that $\mathbb{K}(Y)_B$ is an ideal in $\mathbb{B}(Y)_B$). 
By Mazur's theorem, taking convex combinations we get a norm
bounded net satisfying \cite[Proposition 6.2 (2)]{DB2}.
So $\theta$ is essential.  The last assertion follows from 
\cite[Proposition 6.3]{DB2}.    
   \end{proof} 

A bimodule satisfying the conditions in the last result
will  be called a (right) $A$-$B$-{\em correspondence}. The last theorem  shows that the original 
definition
in \cite[Proposition 6.3]{DB2} can be substantially simplified.

The interior tensor product of right rigged modules  from \cite[p.\ 400-401]{DB2}
is simply the module Haagerup tensor product (see \cite{BMP,DBbook}) of a 
right $A$-rigged module and a right $A$-$B$-correspondence.
We will write this tensor product as 
$Y \otimes_\theta Z$, where $\theta$ is the left action as above.
However we will not focus much on rigged modules in this paper, since that theory is
older and more developed.  

We will use later the interior tensor product of weak* rigged modules \cite{BK2,BK3}.  Here 
 $Y$ is a right $w^*$-rigged module over a dual operator algebra $M$
and, that $Z$ is a right $w^*$-rigged module over 
a dual operator algebra $N$, and $\theta :M \to \mathbb{B}(Z)$
is a weak* continuous unital completely contractive homomorphism. Because $Z$ is a left 
operator module $\mathbb{B}(Z)$-module
(see p.\ 349 in  \cite{BK2}), $Z$ becomes an essential
left dual operator module over $M$ under 
the action $m \cdot z = \theta(m) z$. In this case we say $Z$ is a right $M$-$N$-{\em correspondence}
(an abusive notation because this concept is the weak* variant of the analoguous notion
studied earlier in this section under the same name).
We form the normal module Haagerup tensor product  $Y \otimes^{\sigma h}_{M} Z$
which we also write as $Y \otimes_{\theta} Z$ (again a somewhat abusive notation;
the context will have to make it clear whether we are using the rigged or the 
$w^*$-rigged variant).  By 3.3 in \cite{BK2} this a right $w^*$-rigged module
over $N$, called  the {\em interior tensor product} of $w^*$-rigged modules.

\section{Eilenberg-Watts type theorem}

The norm on the matrix space $M_{m,n}(CB(Y,Z)_M)$ (and on its subspace $M_{m,n}(\mathbb{B}(Y,Z))$)
is the operator space one,  
namely giving $[f_{ij}]$  the `completely bounded norm' in $CB(Y,M_{m,n}(Z))$ of the map $y \mapsto [f_{ij}(y)]$. 
We write this norm as $\Vert [f_{ij}] \Vert_{cb}$.

\begin{lemma} \label{lll}   Suppose that $Y$ is a right $w^*$-rigged (resp.\ rigged) module,
and $Z$ is a right dual  operator module (resp.\ right operator module), over 
a dual operator algebra (resp.\ operator algebra) $M$.    
For  $m, n \in \mathbb{N}$ suppose that $[f_{ij}] \in M_{m,n}(CB(Y,Z)_M)$, with 
each $f_{ij}$ weak* continuous in the $w^*$-rigged case.     Then
$$\Vert [f_{ij}] \Vert_{cb} \, = \, \sup_\alpha \,  \Vert [ f_{ij}(y_k^\alpha)] \Vert,$$
where  $[ f_{ij}(y_k^\alpha)]$ is indexed on rows by $i$ and on columns by $j$ and $k$,
and where $(y_k^\alpha)$ are the `coordinates' of the map $\psi_\alpha$ in Definition
\ref{wrig} (so that $\psi_\alpha([b_k]) = \sum_k \, y_k^\alpha b_k$).  
Also this norm also equals the `completely bounded norm' in $CB(C_n(Y),C_m(Z))$ 
of the map
$[y_j] \mapsto [\sum_j \, f_{ij}(y_j)]$ on $C_n(Z)$.   
In particular for $w^*$-rigged modules $Y, Z$ over $M$ 
we have  $$M_{m,n}(\mathbb{B}(Y,Z)) \cong
\mathbb{B}(C_n(Y),C_m(Z))$$ completely isometrically.  
\end{lemma}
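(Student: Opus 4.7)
The strategy is to reduce the matrix case to a single-map statement, prove the resulting norm identity using the ``partition of unity'' $\psi_\alpha \circ \phi_\alpha \to \mathrm{id}_Y$ from Definition \ref{wrig}, and then apply a standard identification of module maps out of $C_{n(\alpha)}(M)$.

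By definition of the matrix norm on $M_{m,n}(CB(Y,Z)_M)$, the assignment $[f_{ij}] \mapsto F$, where $F \colon Y \to M_{m,n}(Z)$ is $F(y) = [f_{ij}(y)]$, is a complete isometry into $CB(Y, W)_M$ with $W := M_{m,n}(Z)$ itself a (dual) operator $M$-module; and in the $w^*$-rigged case weak* continuity of the $f_{ij}$ passes to $F$. So it suffices, for a single module map $F \colon Y \to W$ (weak* continuous in the $w^*$-rigged case), to establish
\[ \Vert F \Vert_{cb} \; = \; \sup_\alpha \, \Vert F \circ \psi_\alpha \Vert_{cb}. \]
One inequality follows from complete contractivity of $\psi_\alpha$: $\Vert F \circ \psi_\alpha \Vert_{cb} \leq \Vert F \Vert_{cb}$. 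For the reverse, note that $\Vert F \circ \psi_\alpha \circ \phi_\alpha \Vert_{cb} \leq \Vert F \circ \psi_\alpha \Vert_{cb}$ (since $\phi_\alpha$ is completely contractive), and that by Definition \ref{wrig} together with continuity of $F$ in the appropriate topology, $(F \circ \psi_\alpha \circ \phi_\alpha)(y) \to F(y)$ pointwise in norm (rigged) or weak* ($w^*$-rigged). Amplifying to $M_p$ produces entrywise pointwise convergence in $M_p(W)$, and lower semicontinuity of the matrix norms on $W$ (weak* lower semicontinuity in the dual case, using that the unit ball of $M_p(W)$ is weak* closed) gives $\Vert F \Vert_{cb} \leq \sup_\alpha \Vert F \circ \psi_\alpha \circ \phi_\alpha \Vert_{cb} \leq \sup_\alpha \Vert F \circ \psi_\alpha \Vert_{cb}$.

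Next, $F \circ \psi_\alpha$ is a completely bounded $M$-module map from $C_{n(\alpha)}(M)$ to $W$, determined by its values on the standard basis vectors $e_k$, namely $F(y_k^\alpha)$. The standard identification $CB(C_{n(\alpha)}(M), W)_M \cong M_{1, n(\alpha)}(W)$ (completely isometric via $g \mapsto [g(e_k)]_k$; valid for unital $M$ and operator $M$-modules $W$) gives $\Vert F \circ \psi_\alpha \Vert_{cb} = \Vert [F(y_k^\alpha)]_k \Vert_{M_{1, n(\alpha)}(W)}$. Substituting $W = M_{m,n}(Z)$ and $F(y_k^\alpha) = [f_{ij}(y_k^\alpha)]_{ij}$, and using the natural identification $M_{1, n(\alpha)}(M_{m,n}(Z)) \cong M_{m, n \cdot n(\alpha)}(Z)$ (rows indexed by $i$, columns by $(j,k)$), delivers the first stated formula.

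Finally, for the $CB(C_n(Y), C_m(Z))$ reformulation, invoke the standard operator-space complete isometry $CB(C_n(Y), C_m(Z)) \cong M_{m,n}(CB(Y,Z))$ (from the row/column dualities $CB(X, C_m(W)) \cong C_m(CB(X,W))$ and $CB(C_n(X), W) \cong M_{1,n}(CB(X,W))$), under which $[f_{ij}]$ corresponds precisely to the map $[y_j] \mapsto [\sum_j f_{ij}(y_j)]$; the cb norms agree by construction. The final assertion then follows: under this identification $[f_{ij}] \in M_{m,n}(\mathbb{B}(Y,Z))$ (each $f_{ij}$ weak* continuous) corresponds exactly to the weak*-continuous cb $M$-linear maps $C_n(Y) \to C_m(Z)$, since the canonical inclusions $Y \to C_n(Y)$ and projections $C_m(Z) \to Z$ are weak*-continuous $M$-module maps. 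The main obstacle is the lower-semicontinuity step in the $w^*$-rigged case, which relies on matching entrywise weak* convergence of matrix amplifications with weak* convergence in $M_p(W)$; the remaining steps are essentially routine operator-space identifications.
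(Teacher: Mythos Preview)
Your proof is correct and follows essentially the same route as the paper's: both arguments hinge on the factorization $\psi_\alpha \circ \phi_\alpha \to I$ to show $\Vert F \Vert_{cb} \leq \sup_\alpha \Vert F \circ \psi_\alpha \Vert_{cb}$, with the reverse inequality coming from $\Vert \psi_\alpha \Vert_{cb} \leq 1$. The paper carries this out by a direct matrix computation (expanding $f_{ij}(y_{pq})$ through the approximate identity and factoring the resulting product), whereas you package the same idea more abstractly by passing to a single map $F\colon Y \to M_{m,n}(Z)$ and invoking the standard identification $CB(C_{n(\alpha)}(M),W)_M \cong M_{1,n(\alpha)}(W)$; the underlying estimate is identical. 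One small caveat: you flag that identification as ``valid for unital $M$'', but in the rigged case $M$ is only approximately unital, so strictly speaking you should either note that the identification still holds for nondegenerate operator modules (e.g.\ via $C_{n(\alpha)}(M) \cong C_{n(\alpha)} \otimes_h M$), or simply compute the cb norm of $[b_k] \mapsto \sum_k F(y_k^\alpha) b_k$ directly as the paper in effect does.
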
  

\begin{proof}      
The assertions for $w^*$-rigged modules  follow by \cite[Corollary 3.6]{BK2}, or by the weak*  variant of the
following.   In the  rigged module case  the result follows by a trick which occurs very frequently in the 
theory (see e.g.\ \cite{BMP}), so we will be brief.  Write the map $\phi_\alpha$ in Definition
\ref{wrig} as 
$\phi_\alpha(y) = [x_k^\alpha(y)]$, and set $y^\alpha =(y_k^\alpha) \in  M_{1,n}(Y)$.  Then  for $[y_{pq}] \in M_r(Y)$ of norm $1$ we have 
$$[ f_{ij}(y_{pq})] = \lim_\alpha \, [ f_{ij}(\sum_k \, y_k^\alpha (x_k^\alpha , y_{pq})) ]
= \lim_\alpha \, [ \sum_k \,  f_{ij}(y_k^\alpha) (x_k^\alpha , y_{pq})] .$$
The norm of this is dominated by $\sup_\alpha \,  \Vert [ f_{ij}(y_k^\alpha)] \Vert$, which
in turn is  dominated by $\Vert [f_{ij}] \Vert_{cb}$ since $\Vert y^\alpha \Vert \leq 1$.
This proves the displayed equation.   A similar computation shows that 
$\Vert [\sum_j \, f_{ij}(y_j^{pq})] \Vert \leq \sup_\alpha \, \{ \Vert [ f_{ij}(y_k^\alpha)] \Vert$
for a matrix $[y_i^{pq}]$ of norm $1$ with entries $y_i^{pq}$ in $Y$ indexed on rows by $i,p$ and
on columns by $q$.      In turn $\Vert [ f_{ij}(y_k^\alpha)] \Vert$  is  dominated by
the completely bounded norm in $CB(C_n(Y),C_m(Z))$, as may be seen by viewing $f$ 
as `acting by left multiplication' on 
the $n \times (n \cdot n(\alpha))$ matrix $y^\alpha \otimes I_n$.  \end{proof}

For a dual operator algebra $M$ let $\mathcal{W}_{M}$ denote the category
of right $w^*$-rigged modules over $M$.
The morphisms are the weak* continuous (or equivalently,
adjointable) completely bounded
$M$-module maps.   For an approximately unital operator algebra $M$ let $\mathcal{R}_{M}$ be the category
of right rigged modules over $M$, with 
morphisms the  adjointable completely bounded
$M$-module maps. 
  
 We will say that a functor $F$ is completely contractive (resp.\ linear,
normal, strongly continuous) if $T \mapsto F(T)$ is completely contractive 
(resp.\ linear, weak* continuous, takes bounded strongly
convergent (that is, `point-norm' convergent) nets to 
strongly
convergent nets) on the spaces
of morphisms.

\begin{proposition} \label{okba}
For approximately unital 
operator algebras (resp.\ dual operator algebras) $M$ and $N$ let 
$Z$ be a right $M$-$N$-correspondence.
Then the interior tensor product with $Z$ is a 
strongly continuous normal (resp.\ normal) completely contractive linear functor from $\mathcal{W}_{M}$ to $\mathcal{W}_{M}$
 (resp.\ $\mathcal{R}_{M}$ to $\mathcal{R}_{N}$).

In particular, if $M$ and $N$ are weak* Morita equivalent dual operator algebras
in the sense of \cite{BK1}, then their categories of  right $w^*$-rigged modules
are isomorphic. Moreover this isomorphism is implemented by
tensoring with the equivalence bimodule.  
\end{proposition}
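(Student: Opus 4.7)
The plan is to construct the functor explicitly and then verify each adjective in turn, leaning on the universal property of the (normal) module Haagerup tensor product and on Lemma 2.6. On objects, define $F(Y) = Y \otimes_\theta Z$; by \cite[3.3]{BK2} in the $w^*$-rigged case (and the analogous fact for rigged modules cited in the discussion of interior tensor products), $F(Y)$ lies in $\mathcal{W}_N$ (resp.\ $\mathcal{R}_N$). On morphisms $T \in \mathbb{B}(Y,Y')$, set $F(T) = T \otimes \mathrm{id}_Z$; the bilinear map $(y,z) \mapsto T(y) \otimes z$ is completely contractive, $M$-balanced, and separately weak* continuous (since $T$ is weak* continuous by \cite[Proposition 3.4]{BK2}), so the universal property delivers a well-defined adjointable $N$-module map. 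Functoriality in the categorical sense, and linearity in $T$, are immediate from this universal property.

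For complete contractivity, apply the universal property at the matrix level: given $[T_{ij}] \in M_n(\mathbb{B}(Y,Y'))$, the associated map $y \mapsto [T_{ij}(y)]$ has completely bounded norm $\|[T_{ij}]\|_{cb}$, and tensoring with $\mathrm{id}_Z$ produces a map whose cb-norm is bounded by this. Normality in the $w^*$-rigged setting is the delicate step and is the main obstacle: one must show that if $T_\lambda \to T$ weak* in $\mathbb{B}(Y,Y')$ (with the net bounded, by Krein--Smulian), then $T_\lambda \otimes \mathrm{id}_Z \to T \otimes \mathrm{id}_Z$ weak* in $\mathbb{B}(F(Y),F(Y'))$. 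Lemma \ref{lll} describes this weak* topology on bounded sets through the pairings with the `coordinates' $y_k^\alpha$ of the $\psi_\alpha$, which reduces matters to showing that $T_\lambda(y_k^\alpha) \otimes z \to T(y_k^\alpha) \otimes z$ weak* in $F(Y')$, and this follows from the separate weak* continuity of the canonical bilinear map into $Y' \otimes^{\sigma h}_M Z$. A boundedness and density argument, plus Krein--Smulian, finishes the job.

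Strong continuity in the rigged case is easier: if $T_\lambda \to T$ in the point-norm topology on a bounded net, then $T_\lambda(y) \otimes z \to T(y) \otimes z$ in norm, so $(T_\lambda \otimes \mathrm{id}_Z)(u) \to (T \otimes \mathrm{id}_Z)(u)$ in norm for $u$ in the dense span of elementary tensors, and an $\varepsilon/3$ argument using boundedness extends this to all of $Y \otimes_\theta Z$.

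For the Morita equivalence conclusion, a weak* Morita equivalence in the sense of \cite{BK1} supplies a right $M$-$N$-correspondence $X$ and its partner $\tilde X$ (a right $N$-$M$-correspondence) with natural unitary bimodule isomorphisms $X \otimes^{\sigma h}_N \tilde X \cong M$ and $\tilde X \otimes^{\sigma h}_M X \cong N$. By the associativity of the normal module Haagerup tensor product, the functors $(\cdot) \otimes^{\sigma h}_M X : \mathcal{W}_M \to \mathcal{W}_N$ and $(\cdot) \otimes^{\sigma h}_N \tilde X : \mathcal{W}_N \to \mathcal{W}_M$ satisfy $(Y \otimes^{\sigma h}_M X) \otimes^{\sigma h}_N \tilde X \cong Y \otimes^{\sigma h}_M M \cong Y$ naturally, and symmetrically in the other direction, so they are mutually inverse category isomorphisms implemented by tensoring with the equivalence bimodule, as claimed.
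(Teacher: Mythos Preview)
Your argument is essentially correct and follows the same strategy as the paper, which is very terse here: for complete contractivity it simply cites \cite[Proposition 2.2]{BK3}, and for normality it cites \cite[Theorem 3.1]{BKraus}, while the Morita equivalence conclusion is handled exactly as you do, via associativity and \cite[Theorem 3.5]{BK1}.  Your more explicit arguments are in the same spirit as what the paper later spells out in the proof of Proposition~\ref{comm}.

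One point deserves tightening.  In your normality step you say that Lemma~\ref{lll} ``describes this weak* topology on bounded sets through the pairings with the coordinates $y_k^\alpha$.''  But Lemma~\ref{lll} is a \emph{norm} formula, not a description of the weak* topology on $\mathbb{B}(F(Y),F(Y'))$.  What you actually need is the standard fact (e.g.\ 1.6.1 in \cite{DBbook} together with \cite[Theorem 2.3]{BK2}) that on bounded sets the weak* topology on $\mathbb{B}(W,W')$ agrees with the point--weak* topology.  With that in hand, your reduction to elementary tensors via separate weak* continuity of $(y',z)\mapsto y'\otimes z$ is exactly right, and the ``boundedness and density'' step should be phrased as a subnet argument: any weak* limit point $R$ of the bounded net $(T_\lambda\otimes I)$ is weak* continuous, agrees with $T\otimes I$ on the weak*-dense span of elementary tensors, and hence equals $T\otimes I$; so the whole net converges.  (This is precisely how the paper argues the analogous point in the proof of Proposition~\ref{comm}.)  Also, your reference to ``Lemma~2.6'' at the outset does not match any result in the paper; presumably you mean Lemma~\ref{lll}.
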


\begin{proof} Let $F(Y) = Y \otimes_\theta Z$ be the 
interior tensor product.    That $F$ is completely contractive 
follows from Proposition 2.2   in \cite{BK3} and the remark after it, and it is easily seen to be
a linear functor.   If a bounded net $T_t \to T$ in the strong (resp.\ 
weak*) topology in $\mathbb{B}(Y_1,Y_2)$ then $T_t \otimes I  \to T \otimes I$
strongly (resp.\ weak*, see \cite[Theorem 3.1]{BKraus}).   

 If  $(M, N, X, Y)$ is a weak* Morita context in the sense of  \cite{BK1} then by the above $\mathcal{F}(Z)$ = $Z \otimes^{ \sigma h}_{ M} X$
is a completely contractive normal functor from $\mathcal{R}_{M}$ to $\mathcal{R}_{N}$, with 
`inverse' the 
functor $\mathcal{G}$ from $ \mathcal{R}_{N}$ to
 $\mathcal{R}_{M}$ defined by $\mathcal{G}(W)$ = $W \otimes^{\sigma h}_{N} Y$.
 As in Theorem 3.5 in \cite{BK1}
  $F$ and $G$ are inverse
functors via completely isometric isomorphisms, and so the categories $\mathcal{R}_{M}$ and  $\mathcal{R}_{N}$ are isomorphic.
\end{proof}

\begin{theorem} \label{EW}  Let 
$M$ and $N$ be  approximately unital 
operator algebras (resp.\ dual operator algebras),
and suppose that $F$ is a strongly continuous normal (resp.\ normal) completely contractive linear functor from $\mathcal{W}_{M}$ to $\mathcal{W}_{N}$
 (resp.\ $\mathcal{R}_{M}$ to $\mathcal{R}_{N}$).
Then there exists a  right $M$-$N$-correspondence $Z$ such that $F$
is naturally unitarily  isomorphic to the interior tensor product
with $Z$.  
\end{theorem}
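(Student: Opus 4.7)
The natural candidate for the bimodule is $Z := F(M)$, where $M$ is regarded as a right module over itself (tautologically (weak$^*$-)rigged). For each $m \in M$, left multiplication $\lambda_m : M \to M$ is a morphism in the appropriate category (its adjoint is right multiplication by $m$, and it is weak$^*$ continuous in the dual case), and $m \mapsto \lambda_m$ is a completely contractive homomorphism $M \to \mathbb{B}(M)$. Set $\theta := F \circ \lambda_{(\cdot)} : M \to \mathbb{B}(Z)$; complete contractivity and functoriality of $F$ make $\theta$ a completely contractive homomorphism, which in the dual case is automatically weak$^*$ continuous by normality of $F$. By Theorem \ref{ncor} (in the rigged case), or the construction on p.\ 349 of \cite{BK2} (in the $w^*$-rigged case), $\theta$ equips $Z$ with a left operator $M$-module structure. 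Nondegeneracy of the action is where the continuity hypothesis enters: a cai $(e_t) \subset M$ (or simply $1_M$ in the dual case) satisfies $\lambda_{e_t} \to \mathrm{id}_M$ in the point-norm (resp.\ point-weak$^*$) topology on $\mathbb{B}(M)$, so strong continuity (resp.\ normality) of $F$ yields $\theta(e_t) = F(\lambda_{e_t}) \to \mathrm{id}_Z$, whence $\theta(e_t) z \to z$ for every $z \in Z$. Thus $Z$ is a right $M$-$N$-correspondence.

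Next I define a candidate natural transformation $\eta : - \otimes_\theta Z \Rightarrow F$ by $\eta_Y(y \otimes z) := F(r_y)(z)$, where $r_y : M \to Y$ sends $m \mapsto ym$. The map $r_y$ is adjointable (with adjoint $x \mapsto (x,y)$) and weak$^*$ continuous (by separate weak$^*$ continuity of the module action in the dual case), so $F(r_y)$ makes sense. The identity $r_{ym} = r_y \circ \lambda_m$ gives $F(r_{ym}) = F(r_y) \circ \theta(m)$, which is precisely the $M$-balancing needed to factor through $Y \otimes_\theta Z$. Complete contractivity of $\eta_Y$ at every matrix level follows from the universal property of the (normal) module Haagerup tensor product combined with the standard trick: a row $[y_j] \in R_n(Y)$ defines $r_{[y_j]} : C_n(M) \to Y$, and $F$ carries this to a morphism $C_n(Z) \to F(Y)$ (identifying $F(C_n(M)) \cong C_n(Z)$ using additivity of $F$) of completely bounded norm at most $\|[y_j]\|$; matrix-level extensions follow by Lemma \ref{lll}. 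Naturality $F(T) \circ \eta_Y = \eta_{Y'} \circ (T \otimes \mathrm{id}_Z)$ for morphisms $T : Y \to Y'$ is immediate from $T \circ r_y = r_{T(y)}$ and functoriality of $F$.

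It remains to check that each $\eta_Y$ is a unitary isomorphism, which I do in three stages. For $Y = M$, $\eta_M(m \otimes z) = \theta(m)z$ is the canonical isomorphism $M \otimes_\theta Z \cong Z$ coming from nondegeneracy. For $Y = C_n(M)$, linearity of $F$ forces preservation of the finite column biproduct, so $F(C_n(M)) \cong C_n(Z)$ completely isometrically, under which $\eta_{C_n(M)}$ is the standard iso $C_n(M) \otimes_\theta Z \cong C_n(Z)$. For general $Y$, invoke the defining rigging maps $\phi_\alpha : Y \to C_{n(\alpha)}(M)$ and $\psi_\alpha : C_{n(\alpha)}(M) \to Y$ with $\psi_\alpha \phi_\alpha \to \mathrm{id}_Y$ pointwise in norm (resp.\ weak$^*$). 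Strong continuity (resp.\ normality) of $F$ gives $F(\psi_\alpha) F(\phi_\alpha) \to \mathrm{id}_{F(Y)}$, and Proposition \ref{okba} applied to the functor $- \otimes_\theta Z$ gives $(\psi_\alpha \otimes 1)(\phi_\alpha \otimes 1) \to \mathrm{id}_{Y \otimes_\theta Z}$ in the matching topology. The naturality squares for $\phi_\alpha$ and $\psi_\alpha$ then approximately conjugate $\eta_Y$ into the already-verified isomorphism $\eta_{C_{n(\alpha)}(M)}$, forcing $\eta_Y$ onto and, via the matrix-norm formula after Definition \ref{wrig} applied to both $Y$ and $Y \otimes_\theta Z$, completely isometric.

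The main obstacle I foresee is this final limiting step: one has to verify carefully that the completely bounded matrix norm on $Y \otimes_\theta Z$ is genuinely controlled by the $\phi_\alpha$ (so that isometric estimates on $\eta_{C_{n(\alpha)}(M)}$ transfer to $\eta_Y$), and that the convergence $F(\psi_\alpha) F(\phi_\alpha) \to \mathrm{id}_{F(Y)}$ provides sufficient density — in norm in the rigged case, and weak$^*$ (and then via Krein--Smulian in norm) in the $w^*$-rigged case — to close the argument. All the rest is formal functoriality once $Z$, $\theta$, and $\eta$ are in place.
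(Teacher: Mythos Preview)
Your proposal is correct and follows essentially the same route as the paper: set $Z=F(M)$, define the left $M$-action via $\theta(m)=F(L_m)$, build the natural transformation $\eta_Y(y\otimes z)=F(r_y)(z)$, and verify it is a unitary isomorphism by reducing to the case $Y=C_n(M)$ via the rigging maps and the naturality square $F(\phi_\alpha)\circ\eta_Y=\eta_{C_{n(\alpha)}(M)}\circ(\phi_\alpha\otimes I)$, exactly as you outline. The one place where the paper is more careful than your sketch is the identification $F(C_n(M))\cong C_n(Z)$: the paper flags that the argument from \cite{DB1} does not carry over, and instead proves it by noting that $F$ sends the row $(i_k)\in M_{1,n}(\mathbb{B}(M,C_n(M)))$ and the column $(\pi_k)\in C_n(\mathbb{B}(C_n(M),M))$ to contractions, and then uses Lemma~\ref{lll} to reinterpret these as mutually inverse complete contractions between $C_n(Z)$ and $F(C_n(M))$---this is precisely your ``additivity/biproduct'' step, but Lemma~\ref{lll} is doing the real work there (not just in the later matrix-level estimates), so you should invoke it explicitly at that point.
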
 \begin{proof}  We are adapting the proof of the 
$C^*$-module variant in 
\cite[Theorem 5.4]{DB1}.   Let $Z = F(M)$.
We first prove that 
$C_n(F(M)) \cong F(C_n(M))$.   The proof of the analogous statement
in \cite{DB1} does not work, instead we proceed 
as follows.   Indeed if $i_k : M \to C_n(M)$ 
and $\pi_k : C_n(M)  \to M$ are the canonical inclusions and
projections, then these are clearly adjointable.
Then $i = (i_k) \in M_{1,n}(\mathbb{B}(M,C_n(M)))$
 as is $\pi = [\pi_k] \in C_n(\mathbb{B}(C_n(M),M))$.
Thus $F(i)$ is in $M_{1,n}(\mathbb{B}(Z,F(C_n(M)))$ is a contraction,
as is $F(\pi) \in C_n(\mathbb{B}(F(C_n(M)), Z))$.
   By Lemma \ref{lll},
we may view $F(i)$ as a contraction in
$\mathbb{B}(C_n(Z),F(C_n(M)))$,
 and  $F(\pi)$ as a contraction in  $\mathbb{B}(F(C_n(M)),C_n(Z))$.
The composition of these latter (complete) contractions in either order
is easily seen to be the identity, so that 
indeed $F(C_n(M)) \cong C_n(Z)$ as desired.
Because we shall need it shortly we note that the unitary morphism
$C_n(Z) \to F(C_n(M))$ here is $[z_k] \mapsto \sum_k \, F(i_k)(z_k)$.

As in \cite[Theorem 5.4]{DB1}, $Z$ is a right rigged (resp.\ w$^*$-rigged)
 module over $N$,
and we make $Z$ into an $M$-$N$-bimodule by defining $m z = F(L_{m})(z)$ for $m \in M, z \in Z$.  Here $L_m: M \to M$ is left multiplication
by $m$, a completely bounded adjointable map.
Since $F$ is completely contractive, it is easy to argue that the 
associated homomorphism  
$\theta : M \to \mathbb{B}(Z)$ is completely contractive. 
 Since $F$ is strongly continuous (resp.\ normal)
the left action of $M$ on $Z$ is nondegenerate
(resp.\ separately weak* continuous),
and $Z$ is a right $M$-$N$-correspondence.  
 
Define a bilinear map $\tau : Y \otimes_\theta Z \to F(Y)$ by 
$(y,z) \mapsto F(L_y)(z)$, where $L_y$ is  left multiplication
by $y$ on $M$.  This map is an $M$-balanced right $N$-module map
as in \cite[Theorem 5.4]{DB1},
and in the $w^*$-rigged case it is clearly separately weak* continuous.
It is completely contractive in
the sense of  Christensen and Sinclair, since if $y = [y_{ij}]
\in {\rm Ball}(M_n(Y)), z = [z_{ij}]
\in M_n(Z)$ then $$[L_{y_{ik}}] \in {\rm Ball}(M_n(\mathbb{B}(M,Y))) =
 {\rm Ball}(\mathbb{B}(M,M_n(Y))),$$ so that 
$[F(L_{y_{ik}})] \in {\rm Ball}(M_n(\mathbb{B}(Z,F(Y)))$.
Since $M_n(\mathbb{B}(Z,F(Y)))$ may be identified with
$\mathbb{B}(C_n(Z),C_n(F(Y)))$ by Lemma \ref{lll},
it is easy to see that
$$\Vert [\sum_k \, F(L_{y_{ik}})(z_{kj}) ] \Vert
\leq \Vert [z_{ij}] \Vert,$$ 
so that  $\tau$ is completely contractive.    
 By the universal property of the tensor product,
we obtain a 
complete contractive $N$-module map $\tau_Y: Y \otimes_\theta Z \to F(Y)$
which is weak* continuous in the $w^*$-rigged case.

That $\tau_Y$ is a complete  isometry 
is similar to the (matrix normed
version of the) computation in \cite[Theorem 5.4]{DB1}.   However to take
into account the $w^*$-rigged module case the argument changes a bit.
In either case, for $u \in Y \otimes_\theta Z$ we have $\tau_Y(u)$ is the appropriate
limit over $\alpha$ of $F(\psi_\alpha) F(\phi_\alpha) \tau_Y(u)$.
It follows that $\Vert (\tau_Y)_n(u) \Vert_{M_n(F(Y))} = 
\sup_\alpha \; \Vert [ F(\phi_\alpha)  \tau_Y(u_{ij})] \Vert$ for $[u_{ij}] \in M_n(Y \otimes_\theta Z)$.
As  at the bottom of p.\ 277 of \cite{DB1} we have
$$F(\phi_\alpha)  \tau_Y(u_{ij}) = \tau_{C_{n(\alpha)}(M)}((\varphi_\alpha \otimes I)(u_{ij})).$$  Since $\tau_{C_{n(\alpha)}(M)}$ is a complete  isometry we deduce that 
$$\Vert (\tau_Y)_n(u) \Vert_{M_n(F(Y))} = 
\sup_\alpha \; \Vert [(\varphi_\alpha \otimes I)(u_{ij}))] \Vert =  \Vert u \Vert_{M_n(Y \otimes_\theta Z)},$$
with the last equality holding by the formula immediately after Definition \ref{wrig}, since $\varphi_\alpha \otimes I$ and $\psi_\alpha \otimes I$
are the asymptotic factorization maps for $Y \otimes_\theta Z$.
Thus  $\tau_Y$ is a complete  isometry. 
   
That $\tau_Y$ has dense range follows similarly to the argument for this in \cite[Theorem 5.4]{DB1}, the key 
point being that the  functions
$\tau_Y \circ (\psi_\alpha \otimes I)$ and $F(\psi_\alpha) \circ \tau_{C_{n(\alpha)}(M)}$
agree on $C_{n(\alpha)}(M) \otimes_M Y$. 
So $\tau_Y$ is a completely isometric isomorphism, that is, a unitary isomorphism, and 
it is an easy exercise to see that it implements the natural equivalence in the desired sense.   
 \end{proof}

\begin{remark}  {\rm  As in pure algebra, it is an easy exercise to see that 
this yields a  bijection between (isomorphism classes of)
right  $M$-$N$-correspondences and (isomorphism classes of)
such  strongly continuous completely contractive functors.
Composition of such functors corresponds to the interior tensor product
of the bimodules.}
\end{remark}

\section{The exterior tensor product of
$w^*$-rigged modules}   If $Y$ is a right  $w^*$-rigged module over $M$, and if $Z$ is a
right $w^*$-rigged  module over $N$, 
we define the {\em weak$^*$-exterior tensor product} $Y \overline
{\otimes}  Z$ to be their {\em normal minimal (or spatial) tensor product} (see e.g.\
1.6.5 in \cite{DBbook}).  We may view it as a module over $M \overline{\otimes} N$  as follows. Let $L(Y)$ and $L(Z)$ be the weak linking algebras for $Y$ and $Z$ respectively (as in 3.2 in \cite{BK2}). Viewing $Y$ and $Z$ as the $1$-$2$ entries of $L(Y)$ and $L(Z)$  respectively, identify 
$Y \otimes Z$ with  the obvious subspace of the dual operator algebra tensor product $L(Y) \overline{\otimes} L(Z)$. Write $Y \overline {\otimes}  Z$ for its completion in the
weak* topology of $L(Y) \overline{\otimes} L(Z)$.  In this way, $Y \overline {\otimes}  Z$ can be seen to be
invariant under right multiplication by the $2$-$2$-corner  $M \overline{\otimes} N$
of $L(Y) \overline{\otimes} L(Z)$.  Thus $Y \overline {\otimes}  Z$  is a right dual operator 
$(M \overline{\otimes} N)$-module.

The normal minimal tensor product of any  dual operator spaces $Y$ and $Z$,
and in particular  hence the  exterior tensor product of
$w^*$-rigged modules, is
completely isometrically and weak*-homeomorphically contained  
in $(Y_* \widehat{\otimes} Z_*)^*$, where $\hat{\otimes}$ is the  operator space projective 
tensor product.
Thus it is contained completely isometrically and weak*-homeomorphically, via the canonical inclusions, in
$CB(Y_*,Z)$ and $CB(Z_*, Y)$.  
Indeed by  basic operator space theory (see e.g.\
\cite{ERbook,DBbook}), we can identify $(Y_* \hat{\otimes} Z_*)^* = CB(Y_*,Z) = CB(Z_*, Y)$ with the normal Fubini
tensor product of $Y$ and $Z$, and it is known that this contains 
a canonical copy of $Y \overline{\otimes} Z$ (see \cite[Theorem 7.2.3]{ERbook}).

In what follows we will use the fact that the normal minimal tensor product is functorial.  That is, if
$Y_k$ and $Z_k$ are dual operator spaces,
and if  $T_k : Y_k \to Z_k$ are  completely bounded weak* continuous 
maps, for $k = 1,2$, then 
$T_1\otimes T_2: Y_1  \overline{\otimes} Z_1 \to Y_2  \overline{\otimes} Z_2$  defines a unique completely bounded weak* continuous  map. Moreover, ${\lVert T_1 \otimes T_2 \rVert}_{cb} \leq {\lVert T_1 \rVert}_{cb} {\lVert T_2 \rVert}_{cb}$.   This also follows from some  basic operator space theory (see e.g.\
\cite{ERbook,DBbook}).
Tensoring the predual maps of $T_k$  with respect to the operator space projective 
tensor product, and then dualizing, gives a weak* continuous 
map $u : ((Y_1)_* \hat{\otimes} (Z_1)_*)^* \to ((Y_2)_* \hat{\otimes} (Z_2)_*)^*$
with  completely bounded norm $ \leq {\lVert T_1 \rVert}_{cb} {\lVert T_2 \rVert}_{cb}$.
As in the last paragraph we can identify $((Y_k)_* \hat{\otimes} (Z_k)_*)^*$ with the normal Fubini
tensor product of $Y_k$ and $Z_k$.   Restricting 
 $u$ to the copy of $Y_1  \overline{\otimes} Z_1$, 
we get a completely bounded weak* continuous map from  $Y_1  \overline{\otimes} Z_1 \to Y_2  \overline{\otimes} Z_2$.

\begin{theorem} \label{extt}
The weak$^*$-exterior tensor product of $w^*$-rigged modules $Y$ and $Z$  is a $w^*$-rigged module.
\end{theorem}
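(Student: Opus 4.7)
The strategy is to build the factorization data for $Y \overline{\otimes} Z$ over $M \overline{\otimes} N$ by tensoring the given factorizations for $Y$ and $Z$. Let $\phi_\alpha : Y \to C_{n(\alpha)}(M)$, $\psi_\alpha : C_{n(\alpha)}(M) \to Y$ witness $Y$ being $w^*$-rigged over $M$, and let $\phi'_\beta, \psi'_\beta$ analogously witness it for $Z$ over $N$. I would identify $C_{n(\alpha)}(M) \overline{\otimes} C_{m(\beta)}(N) \cong C_{n(\alpha)m(\beta)}(M \overline{\otimes} N)$ as right $(M \overline{\otimes} N)$-modules (using $C_n \otimes C_m \cong C_{nm}$), and set $\Phi_{\alpha,\beta} := \phi_\alpha \otimes \phi'_\beta$ and $\Psi_{\alpha,\beta} := \psi_\alpha \otimes \psi'_\beta$. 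By the functoriality of the normal minimal tensor product recalled just before the theorem, these are weak*-continuous completely contractive maps, and a direct check on elementary tensors shows they are right $(M \overline{\otimes} N)$-module maps. It remains to show that along some (sub)net of pairs $(\alpha, \beta)$ the composition $W_{\alpha,\beta} := (\psi_\alpha \phi_\alpha) \otimes (\psi'_\beta \phi'_\beta)$ satisfies $W_{\alpha,\beta}(w) \to w$ weak* for every $w \in Y \overline{\otimes} Z$, which would establish the $w^*$-rigged structure via Definition \ref{wrig}.

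I would first establish this convergence on the algebraic tensor product $Y \odot Z$. On a simple tensor $y \otimes z$ and against an elementary $g \otimes h \in Y_* \odot Z_*$,
\[
\langle g \otimes h, \, W_{\alpha,\beta}(y \otimes z)\rangle = \langle g, \psi_\alpha\phi_\alpha(y)\rangle \, \langle h, \psi'_\beta \phi'_\beta(z)\rangle \longrightarrow \langle g, y\rangle \langle h, z\rangle
\]
along the product net, as a product of two bounded convergent scalar nets (using the defining weak*-convergence of $\psi_\alpha \phi_\alpha$ and $\psi'_\beta \phi'_\beta$). For an arbitrary $f \in Y_* \widehat{\otimes} Z_*$ written as $f = \sum_k g_k \otimes h_k$ with $\sum_k \|g_k\| \|h_k\| < \infty$, the uniform bound $|\langle g_k, \psi_\alpha\phi_\alpha(y)\rangle \langle h_k, \psi'_\beta \phi'_\beta(z)\rangle| \leq \|g_k\|\|h_k\|\|y\|\|z\|$ combined with dominated convergence for nets gives $\langle f, W_{\alpha,\beta}(y \otimes z)\rangle \to \langle f, y \otimes z\rangle$. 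Linearity then extends this to all of $Y \odot Z$.

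The main obstacle is passing from the (in general only) weak*-dense subspace $Y \odot Z$ to all of $Y \overline{\otimes} Z$, since weak*-pointwise convergence on a weak*-dense subspace does not automatically extend, even for uniformly bounded weak*-continuous operators. My plan is to resolve this by compactness. The space $CB^\sigma(Y \overline{\otimes} Z)$ of weak*-continuous completely bounded maps is a dual operator space, completely isometric to $((Y \overline{\otimes} Z) \widehat{\otimes} (Y \overline{\otimes} Z)_*)^*$ via $T \mapsto (v \otimes f \mapsto \langle Tv, f\rangle)$; on bounded sets its weak* topology coincides with the point-weak* topology (by the same dominated convergence argument as in the previous paragraph). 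The contractions $W_{\alpha,\beta}$ lie in the unit ball of $CB^\sigma(Y \overline{\otimes} Z)$, so by Banach--Alaoglu they admit a cluster point $W \in CB^\sigma(Y \overline{\otimes} Z)$. This $W$ agrees with the identity on the weak*-dense subspace $Y \odot Z$ by the previous paragraph, and $W$ is weak*-continuous, hence $W = \mathrm{id}$. Passing to the corresponding subnet of the product index set along which $W_{\alpha,\beta} \to \mathrm{id}$ in the point-weak* topology supplies the factorization data required by Definition \ref{wrig}, with column length $n(\alpha)m(\beta) \in \mathbb{N}$. Thus $Y \overline{\otimes} Z$ is a $w^*$-rigged right $(M \overline{\otimes} N)$-module.
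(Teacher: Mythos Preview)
Your overall strategy coincides with the paper's: tensor the factorization nets and use the identification $C_{n}(M)\overline{\otimes}C_{m}(N)\cong C_{nm}(M\overline{\otimes}N)$. The paper's proof is terse at exactly the point you flag as the ``main obstacle'', namely passing from $Y\odot Z$ to all of $Y\overline{\otimes}Z$, so it is good that you attempt to justify this. Unfortunately your compactness argument has a genuine gap.

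The identification you use, $CB^\sigma(X)\cong (X\,\widehat{\otimes}\,X_*)^*$ with $X=Y\overline{\otimes}Z$, is incorrect: the standard duality $(V\,\widehat{\otimes}\,W)^*\cong CB(V,W^*)$ gives $(X\,\widehat{\otimes}\,X_*)^*\cong CB(X,(X_*)^*)=CB(X)$, \emph{all} completely bounded maps, not just the weak*-continuous ones. Thus Banach--Alaoglu only provides a cluster point $W\in CB(X)$, and you have no a priori reason that $W$ is weak*-continuous. Without that, the step ``$W$ agrees with the identity on the weak*-dense subspace $Y\odot Z$, hence $W=\mathrm{id}$'' fails. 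Equivalently, the unit ball of $CB^\sigma(X)$ need not be point-weak* closed inside $CB(X)$, so the compactness you invoke is not available. (The argument in the paper's Proposition~\ref{comm} that looks similar uses \cite[Theorem 3.5]{BK2} to know the limit is adjointable, but that theorem presupposes the module in question is already $w^*$-rigged, so it cannot be used here.)

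A clean repair is to avoid the product net altogether and argue iteratively via slices. Write $S_\alpha=\psi_\alpha\phi_\alpha$, $T_\beta=\psi'_\beta\phi'_\beta$. For $g\in Y_*$, $h\in Z_*$ and any $w\in Y\overline{\otimes}Z$ one checks, using the left slice $L_g:Y\overline{\otimes}Z\to Z$ and its weak*-continuity, that $(g\otimes h)\bigl((I\otimes T_\beta)(w)\bigr)=h\bigl(T_\beta(L_g(w))\bigr)\to h(L_g(w))=(g\otimes h)(w)$; so $(I\otimes T_\beta)\to I$ point-weak* on all of $Y\overline{\otimes}Z$, and symmetrically $(S_\alpha\otimes I)\to I$. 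Now, given finite sets $F\subset Y\overline{\otimes}Z$ and $G\subset (Y\overline{\otimes}Z)_*$ and $\epsilon>0$, first choose $\beta$ making $|f((I\otimes T_\beta)w-w)|<\epsilon/2$ for all $w\in F,\,f\in G$, and then with this $\beta$ fixed choose $\alpha$ making $|f((S_\alpha\otimes I)(I\otimes T_\beta)w-(I\otimes T_\beta)w)|<\epsilon/2$. Indexing by triples $(F,G,\epsilon)$ yields a single net of factorizations $\Phi_{\alpha,\beta},\Psi_{\alpha,\beta}$ whose composition converges point-weak* to the identity, as Definition~\ref{wrig} requires.
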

\begin{proof}
Suppose that $\phi_{\alpha} : Y \to C_{n(\alpha)}(M) $ and $\psi_{\alpha}: C_{n(\alpha)}(M) \to Y$ are factorization maps for $Y$,   
and suppose that $\zeta_{\beta}: Z \to C_{m(\beta)}(N) $ and $\eta_{\beta}: C_{m(\beta)}(N) \to Z$ are factorization nets for $Z$, as in Definition \ref{wrig}.  By operator space theory we know that $C_n(M) \overline{\otimes} C_m(N) \cong C_{nm}(M \overline{ \otimes } N)$ completely isometrically and weak*-homeomorphically.  By functoriality of $\overline{\otimes}$, we can 
define $\phi_{\alpha} \otimes \zeta_{\beta} $ and $\psi_{\alpha} \otimes \eta_{\beta}$ of
$Y \overline{\otimes} Z$ through spaces $ C_{n(\alpha)}(M) \overline{\otimes}  C_{m(\beta)}(N)
\cong C_{n(\alpha) m(\beta)}(M \overline{ \otimes } N)$, and check that the conditions of Definition \ref{wrig} are met. 
\end{proof}

\begin{corollary} 
Suppose that $Y_1$ and $Z_1$ are right $w^*$-rigged modules over $M$,
that $Y_2$ and $Z_2$ are right $w^*$-rigged modules over $N$.
Suppose  that  $T_k : Y_k \to Z_k$ are  completely bounded and weak* continuous  module 
maps over $M$ and $N$ respectively, for $k = 1,2$.
Then $T_1\otimes T_2: Y_1  \overline{\otimes} Z_1 \to Y_2  \overline{\otimes} Z_2$  defines a unique completely bounded weak* continuous $(M \overline{\otimes} N)$-module map. Moreover, ${\lVert T_1 \otimes T_2 \rVert}_{cb} \leq {\lVert T_1 \rVert}_{cb} {\lVert T_2 \rVert}_{cb}$.
\end{corollary}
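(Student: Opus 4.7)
The plan is to reduce this to two pieces already established in the text: the functoriality of the normal minimal (spatial) tensor product of dual operator spaces, recalled explicitly in the paragraph immediately preceding Theorem~\ref{extt}, and the description of the $(M \overline{\otimes} N)$-module structure on the exterior tensor product given in the paragraph before that. So the only genuinely new content of the corollary is checking that the map produced by functoriality respects the module action.

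First I would invoke that functoriality statement directly. Applied to the completely bounded weak* continuous maps $T_1$ and $T_2$, it produces a unique completely bounded weak* continuous map $T_1 \otimes T_2 : Y_1 \overline{\otimes} Z_1 \to Y_2 \overline{\otimes} Z_2$ between the ambient dual operator spaces, with $\|T_1 \otimes T_2\|_{cb} \leq \|T_1\|_{cb}\,\|T_2\|_{cb}$, and sending $y \otimes z$ to $T_1(y) \otimes T_2(z)$ on elementary tensors. Uniqueness is automatic from weak* continuity together with the weak* density of the algebraic tensor product inside the normal spatial tensor product.

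Next I would verify the module property on the algebraic tensor product. Recall that in the paper the right $(M \overline{\otimes} N)$-action on $Y_i \overline{\otimes} Z_i$ is inherited from right multiplication in the linking algebra tensor product $L(Y_i) \overline{\otimes} L(Z_i)$, and so on elementary tensors it is simply $(y \otimes z)(m \otimes n) = ym \otimes zn$. Since $T_1$ is an $M$-module map and $T_2$ an $N$-module map, a one-line computation gives
$$
(T_1 \otimes T_2)\bigl((y \otimes z)(m \otimes n)\bigr) \;=\; T_1(y)m \otimes T_2(z)n \;=\; \bigl((T_1 \otimes T_2)(y \otimes z)\bigr)(m \otimes n),
$$
so $T_1 \otimes T_2$ commutes with the right action of the algebraic tensor $M \otimes N$ on the algebraic tensor $Y_1 \otimes Z_1$.

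The step I expect to be the main obstacle is bootstrapping this algebraic $(M \otimes N)$-linearity to full $(M \overline{\otimes} N)$-linearity on the normal spatial tensor product. Because the exterior tensor product is a right dual operator $(M \overline{\otimes} N)$-module (by the construction preceding Theorem~\ref{extt}), the module action is separately weak* continuous on bounded sets, and by Krein--Smulian it suffices to approximate arbitrary elements of $Y_1 \overline{\otimes} Z_1$ and of $M \overline{\otimes} N$ by bounded nets of algebraic tensors converging in the weak* topology. Combining this with the weak* continuity of $T_1 \otimes T_2$ from the first paragraph and the separate weak* continuity of the module actions on both $Y_1 \overline{\otimes} Z_1$ and $Y_2 \overline{\otimes} Z_2$, the identity $(T_1 \otimes T_2)(wc) = (T_1 \otimes T_2)(w)\,c$ passes from elementary tensors to all $w \in Y_1 \overline{\otimes} Z_1$ and $c \in M \overline{\otimes} N$, completing the proof.
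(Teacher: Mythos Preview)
Your proposal is correct and follows essentially the same approach as the paper: the paper's proof simply says ``Nearly all of this is just the functoriality discussed above Theorem~\ref{extt}. It is easy to argue by weak* density arguments that $T_1\otimes T_2$ is a $(M \overline{\otimes} N)$-module map,'' and your write-up is a careful unpacking of exactly those two sentences. One small remark: the invocation of Krein--Smulian is a bit of a detour, since once you have separate weak* continuity of the module actions and weak* continuity of $T_1\otimes T_2$, the identity $(T_1\otimes T_2)(wc)=(T_1\otimes T_2)(w)\,c$ extends directly by weak* density in each variable without needing bounded approximants.
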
   

\begin{proof}   Nearly all of this is just  the functoriality discussed above Theorem \ref{extt}.  It is easy to argue by weak* density arguments that $T_1\otimes T_2$  is a
$(M \overline{\otimes} N)$-module map.  
 \end{proof}

One may check that the weak$^*$-exterior tensor product has other properties
analogous to the interior  tensor product.   For example it is associative,
 `injective', and is appropriately projective for $w^*$-orthogonally 
complemented submodules and  
commutes with direct sums (we will prove this at the end of the next section).

\section{Complemented submodules} \label{cosub}

We say that a $w^*$-rigged module $Z$ over a dual operator algebra $M$
 is the $w^*${\em -orthogonal direct sum} of weak* closed submodules $Y$ and $W$, if  $Y + W = Z$,
$Y \cap W = (0)$, and $W$ and $Y$ are the ranges of 
two completely contractive idempotent maps $P$ and $Q$.   We say that $Y$ is $w^*${\em -orthogonally 
complemented} in $Z$ if there exists such a $W$. It follows from algebra
that the latter two maps $P, Q$ are unique, and are $M$-module maps
adding to $I_Z$ with $P Q = QP = 0$.   Also, they are weak* continuous.  Indeed suppose that $x_t = y_t + w_t$ is a bounded net with weak* limit 
$x = y + w$, where $y_t, y \in Y,$ and $w_t , w \in W$.   Then $(y_t)$ is bounded, and if $y_{t_\nu} \to z$
is a weak* convergent subnet, then $z \in Y$, and $w_{t_\nu} \to x - z \in W$.   It follows that 
$z = y$ and $x - z = w$.   By topology $y_t \to y$ weak*, so by  the Krein-Smulian theorem
$P$ is weak* continuous.   It follows from e.g.\ 
\cite[Theorem 7.2]{DB2} that $Z$ is the $w^*$-rigged module   
direct sum $Y \oplus^{c} W$ completely isometrically and unitarily. 
From Section 3.5 in \cite{BK2},  we see that the 
$w^*$-orthogonally complemented submodules of a
$w^*$-rigged module $Z$ are precisely the  ranges
 of completely contractive idempotents
 in $\mathbb{B}(Z)$.

\begin{proposition} \label{lmi}  The right $M$-summands in 
a $w^*$-rigged module $Z$ in the sense of
{\rm \cite{BEZ}} (see also {\rm \cite[Sections 4.5 and 4.8]{DBbook}}),
  are precisely the $w^*$-orthogonally complemented submodules of $Z$.
For any such submodule $W$ of $Z$ there is
a unique contractive linear projection from $Z$ onto $W$.
 \end{proposition}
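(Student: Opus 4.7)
The plan is to follow the same strategy as in Corollary \ref{lms}, but replace the role of Lemma \ref{notin} with its natural $w^*$-analog. Specifically, I will first establish that
$$\mathcal{M}_\ell(Z) \;=\; \mathbb{B}(Z)$$
completely isometrically isomorphically. For the forward inclusion, any $T \in \mathcal{M}_\ell(Z)$ is automatically weak* continuous by \cite[Theorem 4.7.1]{DBbook} since $Z$ is a dual operator space, and $T$ commutes with right multiplication by any $a \in M$, the latter being a right operator space multiplier (left and right multipliers commute by 4.5.6 in \cite{DBbook}). Hence $T \in CB^\sigma(Z)_M = \mathbb{B}(Z)$, and the `identity map' gives a completely contractive homomorphism $\mathcal{M}_\ell(Z) \to \mathbb{B}(Z)$. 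For the reverse direction, the observation on p.\ 349 of \cite{BK2} shows that $Z$ is a left dual operator $\mathbb{B}(Z)$-module under the canonical action, so the operator space multiplier theory (\cite[Theorem 4.6.2 (1) and (2)]{DBbook}) yields a completely contractive homomorphism $\pi: \mathbb{B}(Z) \to \mathcal{M}_\ell(Z)$ with $\pi(T)(z) = T(z)$. These maps are mutual inverses, exactly as in the rigged case.

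With this identification in hand, the first statement reduces to matching three characterizations of the same objects. By \cite{BEZ}, the right $M$-summands in $Z$ are precisely the ranges of the left $M$-projections on $Z$, which in turn are the (orthogonal) projections in $\mathcal{M}_\ell(Z)$, i.e.\ the completely contractive idempotents. Under the equality $\mathcal{M}_\ell(Z) = \mathbb{B}(Z)$ these are exactly the completely contractive idempotents in $\mathbb{B}(Z)$. Now the passage from Section 3.5 of \cite{BK2} recalled just above the proposition identifies the ranges of such idempotents with the $w^*$-orthogonally complemented submodules of $Z$. Concatenating these identifications yields the equality of the two classes of submodules.

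The second assertion, uniqueness of the contractive linear projection onto $W$, is a general property of right $M$-summands in an operator space, recorded in \cite{BEZ} (see also \cite[Sections 4.5 and 4.8]{DBbook}), and requires no separate argument once the first assertion is established. The only step needing genuine verification is the $w^*$-analog of Lemma \ref{notin}, and this is essentially cosmetic: the proof of Lemma \ref{notin} goes through verbatim once one replaces $CB(Y)_A$ by $\mathbb{B}(Z) = CB^\sigma(Z)_M$, with the automatic weak* continuity of left multipliers on a dual space handling the extra `$\sigma$' with no work.
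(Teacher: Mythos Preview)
Your proof is correct and follows essentially the same route as the paper. The only difference is that you re-derive the identification $\mathcal{M}_\ell(Z) = \mathbb{B}(Z)$ by adapting the argument of Lemma \ref{notin}, whereas the paper simply cites this fact from \cite[Theorem 2.3]{BK2}; after that, both proofs invoke \cite{BEZ} and the discussion from Section 3.5 of \cite{BK2} exactly as you do.
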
   

\begin{proof}   This is similar to the 
proof of Corollary  \ref{lms}, but using the 
fact from \cite[Theorem 2.3]{BK2} that 
the left multiplier operator algebra of $Z$ is
$\mathbb{B}(Z)$, so that the orthogonal projections here
 are the completely contractive idempotents
 in $\mathbb{B}(Z)$.
\end{proof} 
 
\begin{example}  Unlike the case when $M$ is a von Neumann algebra 
(see e.g.\ {\rm 8.5.16} in {\rm \cite{DBbook}}), weak* closed submodules of $w^*$-rigged modules
(or even of weak* Morita equivalence bimodules), need not be $w^*$-orthogonally complemented.
For example if $f$ is a nontrivial  inner function in $M = H^{\infty}(\mathbb{D})$ (such as the monomial 
$z$) then 
$Y = f H^{\infty}(\mathbb{D})$ is not complemented in the 
$M$-module $Z = H^{\infty}(\mathbb{D})$.  We note that $Y$ is a weak* Morita equivalence bimodule, with $\tilde{Y} = f^{-1} M$.   The latter is not a subset of $M$, and indeed the adjoint $\tilde{i}$ 
of the inclusion map $i : Y \to Z$ is not a projection.    \end{example}

\begin{lemma}  \label{nned}
Let $Z$ be a $w^*$-rigged module over $M$ and 
let $P:Z  \to Z$ be a $w^*$-continuous completely contractive idempotent
module map. Then the range of $P$ is a $w^*$-rigged module over $M$,
which is $w^*$-orthogonally complemented in $Z$. Also
$P$ is adjointable both as a map into $Z$ and  into $P(Z)$.
The dual module $\widetilde{P(Z)}$ of $P(Z)$ can be identified completely isometrically and $w^*$-homeomorphically
with the weak$^*$-orthogonally complemented
submodule $\tilde{P}(\tilde{Z})$ of $\tilde{Z}$, with the dual pairing
being the restriction of the pairing $\tilde{Z} \times Z \to M$.  
\end{lemma}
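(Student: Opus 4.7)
My plan is to address the four assertions in order, leveraging the factorization of $Z$ and the adjoint relation for $w^*$-rigged modules.

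First, for the claims about $P(Z)$ itself, I would produce a factorization of $P(Z)$ by restricting/composing the factorization maps of $Z$ with $P$. Explicitly, if $\phi_\alpha : Z \to C_{n(\alpha)}(M)$ and $\psi_\alpha : C_{n(\alpha)}(M) \to Z$ are the maps from Definition \ref{wrig}, then $\phi_\alpha\vert_{P(Z)}$ and $P \circ \psi_\alpha$ are completely contractive weak$^*$-continuous $M$-module maps with $(P \circ \psi_\alpha)(\phi_\alpha\vert_{P(Z)}(y)) = P(\psi_\alpha \phi_\alpha y) \to P(y) = y$ weak$^*$ for $y \in P(Z)$. So $P(Z)$ is $w^*$-rigged. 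Since $I-P$ is also a $w^*$-continuous completely contractive idempotent module map, $P(Z)$ is $w^*$-orthogonally complemented in $Z$ (with complement $(I-P)(Z)$). Finally, $P$ is adjointable as a map $Z \to Z$ and as a map $Z \to P(Z)$ because by Proposition 3.4 of \cite{BK2} adjointability is equivalent to being weak$^*$-continuous and completely bounded.

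Next, I would identify the adjoint $\tilde{P}$ explicitly. Viewing $\tilde{Z} \cong CB^\sigma(Z,M)_M$, the adjoint $\tilde{P}$ is precomposition with $P$, that is $\tilde{P}(x) = x \circ P$. This is clearly a $w^*$-continuous completely contractive left $M$-module idempotent on $\tilde{Z}$, so by the `left module' analogue of what was established in the first paragraph the submodule $\tilde{P}(\tilde{Z})$ is $w^*$-orthogonally complemented in $\tilde{Z}$. Define $\Phi : \tilde{P}(\tilde{Z}) \to \widetilde{P(Z)}$ by restriction, where we identify $\widetilde{P(Z)} \cong CB^\sigma(P(Z),M)_M$ using the first paragraph. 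The map $\Phi$ is completely contractive and weak$^*$-continuous since restriction on $CB^\sigma$ spaces is so.

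Finally I would verify that $\Phi$ is a $w^*$-homeomorphic surjective complete isometry. For surjectivity, given $g \in \widetilde{P(Z)}$, the element $g \circ P \in \tilde{Z}$ lies in $\tilde{P}(\tilde{Z})$ since $\tilde{P}(g \circ P) = g \circ P \circ P = g \circ P$, and $\Phi(g \circ P) = g$. For injectivity, if $x = \tilde{P}x$ restricts to zero on $P(Z)$, then for every $y \in Z$ we have $(x,y) = (\tilde{P}x,y) = (x,Py) = 0$, whence $x=0$. These two identifications, being mutually inverse completely contractive maps, make $\Phi$ a complete isometry, and weak$^*$-continuity of the inverse $g \mapsto g \circ P$ (composition with the weak$^*$-continuous $P$) gives the weak$^*$-homeomorphism. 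The description of the dual pairing is then immediate: for $x \in \tilde{P}(\tilde{Z})$ and $y \in P(Z)$, the pairing of $\Phi(x)$ with $y$ in $\widetilde{P(Z)} \times P(Z)$ is $x(y) = (x,y)$, exactly the restriction of the original pairing. The only step requiring real care is ensuring that $\Phi$ is a \emph{complete} isometry and a weak$^*$-homeomorphism rather than merely a bounded bijection; but this follows cleanly once both $\Phi$ and its inverse are recognized as complete contractions by functoriality of $CB^\sigma(\cdot, M)_M$ in its first variable.
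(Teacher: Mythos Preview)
Your argument is correct and follows essentially the same line as the paper: build the factorization of $P(Z)$ from that of $Z$ via $\phi_\alpha|_{P(Z)}$ and $P\circ\psi_\alpha$, invoke Proposition~3.4 of \cite{BK2} for adjointability, and identify $\widetilde{P(Z)}$ with $\tilde P(\tilde Z)$ via the restriction map and its inverse $g\mapsto g\circ P$. The only place you should add a word of justification is the assertion that $I-P$ is completely contractive; this is not automatic for idempotents in general, but holds here because (by \cite[Theorem~2.3]{BK2}) $\mathbb{B}(Z)=\mathcal{M}_\ell(Z)$ is a unital operator algebra, so a contractive idempotent is an orthogonal projection there and $I-P$ is again a contractive idempotent---this is exactly why the paper phrases it as ``$P$ is an orthogonal projection in $\mathbb{B}(Z)$'' and cites \cite[Theorem~3.9]{BK2} (equivalently, the sentence just before the lemma covers it).
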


\begin{proof}
It is easy to see from the Remark after 
Theorem 2.7 in \cite{BK2}, and considering 
the maps between $Z$ and $Y = P(Z)$,
that $Y$ is a $w^*$-rigged module over $M$.  By Proposition 3.4 in \cite{BK2}, $P$ is adjointable both as a map into $Y$ and $Z$.    Since $P$ 
is an orthogonal projection in $\mathbb{B}(Z)$, $Y$ is $w^*$-orthogonally 
complemented in $Z$ (cf.\  \cite[Theorem 3.9]{BK2}).      
 We define $W$ =  Ran$(\tilde{P})$ = $\{f \circ P : f \in \tilde{Z} \}$. This is easily seen
to be a weak* closed submodule of $\tilde{Y}$.    Note that $CB^\sigma(Y,M) = 
\{ f_{\vert Y} : f \in CB^\sigma(Z,M) \}$.   The map $f \mapsto  f_{\vert Y} : W  \to CB^\sigma(Y,M)$
is a complete isometric $M$-module map, so that $\tilde{Y} \cong \tilde{P}(\tilde{Z})$.   The 
remaining assertion is now easy to check.
 \end{proof}

\begin{proposition}  If $Y$ is a weak$^*$-orthogonally complemented submodule
in a $w^*$-rigged module  $Z$, then $\mathbb{B}(Y)$ is a completely isometrically isomorphic
to a weak* closed  completely contractively weak* complemented subalgebra
of $\mathbb{B}(Z)$.
\end{proposition}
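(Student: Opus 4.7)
The plan is to build an explicit embedding using the $w^{*}$-continuous completely contractive $M$-module projection $P : Z \to Z$ with range $Y$ (provided by Proposition \ref{lmi} together with the discussion at the start of Section \ref{cosub}), and the inclusion $i : Y \hookrightarrow Z$. Define
\[
\Phi : \mathbb{B}(Y) \to \mathbb{B}(Z), \qquad \Phi(T) = i \circ T \circ P,
\]
\[
\Psi : \mathbb{B}(Z) \to \mathbb{B}(Y), \qquad \Psi(S) = P \circ S \circ i,
\]
where in the definition of $\Psi(S)$ we use that the image of $P$ lies in $Y$ in order to regard $PSi$ as a map $Y \to Y$. Both $\Phi$ and $\Psi$ are completely contractive, since composition of weak$^{*}$ continuous completely bounded $M$-module maps is again such.

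The next step is to check the algebraic properties. Using the identity $P \circ i = \mathrm{id}_{Y}$ (since $P$ restricts to the identity on $Y$), one computes $\Phi(T_{1}) \Phi(T_{2}) = i T_{1} P i T_{2} P = i T_{1} T_{2} P = \Phi(T_{1} T_{2})$, so $\Phi$ is a homomorphism, and similarly $\Psi \circ \Phi = \mathrm{id}_{\mathbb{B}(Y)}$. Combined with the complete contractivity of $\Psi$, this forces $\Phi$ to be a complete isometry at every matrix level. Consequently $E := \Phi \circ \Psi$ satisfies $E \circ E = \Phi (\Psi \Phi) \Psi = E$, is completely contractive on $\mathbb{B}(Z)$, and has range exactly $\Phi(\mathbb{B}(Y))$; that is, $\Phi(\mathbb{B}(Y))$ is a completely contractively complemented subalgebra of $\mathbb{B}(Z)$.

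For the weak$^{*}$ assertions, I would invoke the fact that left or right composition with a fixed weak$^{*}$ continuous completely bounded module map is weak$^{*}$ continuous on the spaces $\mathbb{B}(\cdot,\cdot)$ of weak$^{*}$ continuous completely bounded module maps between $w^{*}$-rigged modules. This follows from the dual operator algebra and dual operator module structures established in Section 2 of \cite{BK2} (cf.\ also Lemma \ref{lll} here), upon pre-dualizing the composition. Hence $\Phi$ and $\Psi$, being double compositions with the fixed weak$^{*}$ continuous maps $i$ and $P$, are weak$^{*}$ continuous. It follows that $E$ is weak$^{*}$ continuous and that its range $\Phi(\mathbb{B}(Y))$ is weak$^{*}$ closed.

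The main obstacle is the verification of the weak$^{*}$ continuity of $\Phi$ and $\Psi$: the algebraic and completely isometric properties drop out immediately from the existence of $P$ and the identity $P \circ i = \mathrm{id}_{Y}$, but properly matching the weak$^{*}$ topologies on $\mathbb{B}(Y)$ and $\mathbb{B}(Z)$ relies on invoking (or reproducing a fragment of) the predual constructions for these algebras from \cite{BK2}. Everything else is routine diagram-chasing once $P$ is in hand.
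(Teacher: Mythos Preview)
Your proof is correct, and at the level of the actual maps it coincides with the paper's: under the identifications $\mathbb{B}(Y)\cong Y\otimes^{\sigma h}_{M}\tilde{Y}$ and $\mathbb{B}(Z)\cong Z\otimes^{\sigma h}_{M}\tilde{Z}$ from \cite{BK2}, your $\Phi(T)=iTP$ and $\Psi(S)=PSi$ are exactly the maps $i\otimes\tilde{P}$ and $P\otimes\tilde{i}$ that the paper writes down. The difference is only in packaging. The paper invokes functoriality of the normal module Haagerup tensor product, so that weak$^*$ continuity of $\Phi$ and $\Psi$ is automatic and no separate argument is needed; this is what makes their proof a few lines. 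Your more elementary route avoids the tensor identification entirely but then has to justify weak$^*$ continuity by hand, which (as you note) requires knowing that on bounded nets the weak$^*$ topology of $\mathbb{B}(Y)$ agrees with the point--weak$^*$ topology and then applying Krein--Smulian; this is available from \cite[Theorem~2.3]{BK2} together with the standard description of the weak$^*$ topology on $CB(Y)$ (cf.\ the argument in the proof of Proposition~\ref{comm}). Either framing works; the tensor-product version buys you the weak$^*$ statements for free, while yours is more transparent about what the embedding actually does.
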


\begin{proof}     Let $i : Y \to Z$ be the inclusion and $P : Z \to Y$ the projection.
Then by functoriality of the tensor product,
$$i \otimes \tilde{P} : Y \otimes^{\sigma h}_M \tilde{Y} = \mathbb{B}(Y)
\to Z \otimes^{\sigma h}_M \tilde{Z} = \mathbb{B}(Z)$$ is completely contractive  and weak* continuous, and
is easy to check is a 
homomorphism.  Similarly one obtains a completely contractive weak* continuous retraction
$P \otimes \tilde{i} : Z \otimes^{\sigma h}_M \tilde{Z} = \mathbb{B}(Z) \to Y \otimes^{\sigma h}_M \tilde{Y} = \mathbb{B}(Y)$, with $(P \otimes \tilde{i}) \circ (i \otimes \tilde{P}) = I$.  
 \end{proof}

For the following result we recall that the $W^*$-dilation of a right $w^*$-rigged module $Z$ over a dual operator algebra $M$ is the canonical right $W^*$-module over  a von Neumann algebra $N$ generated by $M$
given by $Y \otimes_\theta N$.    Here $\theta : M \to N$ is the inclusion.  

\begin{corollary}
Let $Z$ be a right $w^*$-rigged module over a dual operator algebra $M$, and suppose 
that $Y$ is a subspace of $Z$, with $i :  Y \to Z$ the inclusion map. The following are equivalent:
\begin{enumerate} 
\item $Y$ is weak$^*$-orthogonally complemented in $Z$. 
\item $Y$ is a  $w^*$-rigged module over $M$ and there exists a completely contractive 
weak* continuous $M$-module map $j : \tilde{Y} \to \tilde{Z}$ 
such that $\tilde{i} \circ j = I_{\tilde{Y}}$.  
\item  $Y$ is a  $w^*$-rigged module over $M$, and there is a von Neumann algebra $N$ generated by $M$
such that for the induced 
map $i \otimes I_N$ between the $W^*$-dilations of $Y$ and $Z$ with respect to $N$,  we have that $i \otimes I_N$ is 
an isometry whose $W^*$-module adjoint $(i \otimes I_N)^*$ maps $Z \otimes 1_N$ into $Y \otimes 1_N$.  
\item Same as {\rm (3)}, but for every von Neumann algebra $N$ generated by $M$.
\end{enumerate}
\end{corollary}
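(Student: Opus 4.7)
The plan is to verify the cycle $(1) \Leftrightarrow (2)$ using the adjoint correspondence for $w^*$-rigged modules, and then $(1) \Rightarrow (4) \Rightarrow (3) \Rightarrow (1)$ via the $W^*$-dilation functor from Proposition \ref{okba}.

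For $(1) \Leftrightarrow (2)$ I would argue as follows. Assuming (1), Lemma \ref{nned} supplies a weak*-continuous completely contractive idempotent $E = i \circ Q$ on $Z$ with range $Y$, where $Q \colon Z \to Y$ satisfies $Q \circ i = I_Y$. Both $i$ and $Q$ are adjointable by Proposition 3.4 of \cite{BK2}, so taking $j = \tilde{Q}$ yields $\tilde{i} \circ j = \widetilde{Q \circ i} = I_{\tilde Y}$. Conversely, given $j$ as in (2), the self-double-duality $\widetilde{\tilde{Y}} = Y$ produces a map $\tilde{j}\colon Z \to Y$ with $\tilde{j} \circ i = \widetilde{\tilde{i} \circ j} = I_Y$; the composition $i \circ \tilde{j}$ is then a completely contractive weak*-continuous idempotent on $Z$ with range $Y$, so (1) holds.

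For $(1) \Rightarrow (4)$, fix any von Neumann algebra $N$ generated by $M$, and apply the functor $- \otimes_\theta N$ to the retraction $Q \circ i = I_Y$. By normality and complete contractivity of this functor (Proposition \ref{okba}), $(Q \otimes I_N)(i \otimes I_N) = I_{Y \otimes_\theta N}$, which forces $i \otimes I_N$ to be a complete isometry. The element $(i \otimes I_N)(Q \otimes I_N)$ is a contractive idempotent in the $C^*$-algebra $\mathbb{B}(Z \otimes_\theta N)$, hence self-adjoint. Combined with the isometry identity $(i \otimes I_N)^*(i \otimes I_N) = I$ and left-cancellability of $i \otimes I_N$, this forces $(i \otimes I_N)^* = Q \otimes I_N$; applying this to $z \otimes 1_N$ gives $Q(z) \otimes 1_N \in Y \otimes 1_N$, which is (4). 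The implication $(4) \Rightarrow (3)$ is immediate. For $(3) \Rightarrow (1)$ I would define $Q \colon Z \to Y$ by the rule $(i \otimes I_N)^*(z \otimes 1_N) = Q(z) \otimes 1_N$, which is unambiguous since $y \mapsto y \otimes 1_N$ is (completely) injective. Complete contractivity and weak*-continuity of $Q$ descend from those of $(i \otimes I_N)^*$ together with the weak*-continuous complete isometry $z \mapsto z \otimes 1_N$; the $M$-linearity uses the balancing identity $zm \otimes 1_N = z \otimes m$ with the $N$-linearity of $(i \otimes I_N)^*$; and the relation $(i \otimes I_N)^*(i \otimes I_N) = I$ yields $Q \circ i = I_Y$. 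Then $i \circ Q$ is the completely contractive weak*-continuous idempotent required by (1).

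The principal obstacle will be the self-adjointness step in $(1) \Rightarrow (4)$ that pins down $(i \otimes I_N)^* = Q \otimes I_N$: one must confirm that the contractive idempotent $(i \otimes I_N)(Q \otimes I_N)$ in the $W^*$-algebra $\mathbb{B}(Z \otimes_\theta N)$ is automatically a self-adjoint projection, and then execute the cancellation argument on the isometry $i \otimes I_N$. A secondary technical point is checking throughout that the canonical inclusion $z \mapsto z \otimes 1_N$ from $Z$ into $Z \otimes_\theta N$ is weak*-continuous, completely isometric, and $M$-module linear, since this underpins the transfer of structure in both directions involving the dilation.
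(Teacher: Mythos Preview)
Your proposal is correct and follows essentially the same strategy as the paper. The paper likewise proves $(1)\Rightarrow(4)$ by tensoring the retraction $P\circ i=I_Y$ with $I_N$ and using that $f g$ is a contractive idempotent in the $W^*$-algebra $\mathbb{B}(Z\otimes_\theta N)$ to conclude $g=f^*$; and it proves $(3)\Rightarrow(1)$ by reading off $Q(z)$ from $(i\otimes I_N)^*(z\otimes 1_N)$ via the canonical embedding $Y\hookrightarrow Y\otimes_\theta N$. The only noticeable difference is in $(2)\Rightarrow(1)$: you pass directly to $\tilde{j}:Z\to Y$ by double duality and exhibit $i\circ\tilde{j}$ as the required idempotent on $Z$, whereas the paper first observes that $j\circ\tilde{i}$ is a completely contractive weak$^*$-continuous idempotent on $\tilde{Z}$ with range $j(\tilde{Y})$, and then invokes Lemma~\ref{nned} to dualize and identify $\widetilde{j(\tilde Y)}$ with $Y$ inside $Z$. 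Both routes produce the same projection $i\circ\tilde{j}=\widetilde{j\circ\tilde{i}}$, so this is a cosmetic rather than a substantive divergence.
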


\begin{proof}          (1) $\Rightarrow$ (4) \ If $P : Z \to Y$ is the projection
then we have adjointable contractions  $f = (i \otimes I_N) : Y \otimes^{\sigma h}_M N \to Z\otimes^{\sigma h}_M N$
and $g = (P \otimes I_N) : Z \otimes^{\sigma h}_M N \to Y \otimes^{\sigma h}_M N$ with 
$g \circ f = I$.    It follows that $f$ is an isometry, $g = f^*$, and $f^* = g$  maps $Z \otimes 1_N$ into $Y \otimes 1_N$.

 (3) $\Rightarrow$ (1) \  Let $j$ be the canonical isometry from $Y$ into  its  $W^*$-dilation,
which is a complete isometry by 3.4 in \cite{BK2}.
 It follows that $W = (i \otimes I_N) (Y \otimes^{\sigma h}_M N)$ is a 
weak* closed submodule of $Z\otimes^{\sigma h}_M N$, and the latter is a $W^*$-module.   By  e.g.\ {\rm 8.5.16} in {\rm \cite{DBbook}}, 
the $C^*$-module adjoint  of  $i \otimes I_N$ is a contractive weak* continuous projection $P$ from $Z\otimes^{\sigma h}_M N$
onto $W$.   Thus  $P \circ (i \otimes I_N) = I$ on $Y \otimes^{\sigma h}_M N$.    
Define $Q(z) = j^{-1} (i \otimes I_N)^{-1} P(z \otimes 1)$, this is a weak* continuous completely contractive $M$-module projection  onto $Y$.     Indeed $$Q(i(y)) =   j^{-1} (i \otimes I_N)^{-1} P(i(y) \otimes 1)
=  j^{-1} (y \otimes 1) = y, \qquad y \in Y.$$

Clearly (4)  implies  (3), and (1)  implies all the others.  

(2) $\Rightarrow$ (1) \ $P = j \circ \tilde{i}$  is a  weak* continuous 
completely contractive projection onto $j(\tilde{Y})$.   So the latter is  weak$^*$-orthogonally complemented in $\tilde{Z}$.  Hence by Lemma \ref{nned} its dual module may be identified with $\tilde{P}(Z) =
i(\tilde{j}(Z)) = Y$ (note that $\tilde{j} \circ i = I_Y$, so $\tilde{j}$ maps onto $Y$), and this is  weak$^*$-orthogonally complemented in $Z$.
\end{proof}

\begin{remark}     {\rm It seems possible that the equivalences in the last result still hold with some of the words
`weak* continuous'  or `$M$-module' removed in (2).   However this seems quite difficult at this
present time, although the last assertion of Proposition
\ref{lmi} seems pertinent here.
Things are better if $Z$ is a module of the kind considered 
in Proposition  \ref{bmex}.
If we are in that case, suppose 
that  there exists a completely contractive
 $M$-module map $j : \tilde{Y} \to \tilde{Z}$
such that $\tilde{i} \circ j = I_{\tilde{Y}}$ as in (2).  
Then $P = j \circ \tilde{i}$ is a
completely contractive $M$-module projection 
on $\tilde{Z} = CB^\sigma(Z,M)_M = CB(Y,M)_M$. 
  Hence it is weak* continuous 
by Proposition  \ref{bmex}, and we can continue as in the 
proof of (2) $\Rightarrow$ (1) above.}
\end{remark}  

At the end of Section 3 in \cite{BK2} we mentioned  with a sketchy proof the fact that direct sums commute 
with the interior tensor product; indeed we have left and right distributivity of 
$\otimes^{\sigma h}_M$ over column direct sums of $w^*$-rigged
modules.    It is also true that  direct sums commute 
with the exterior tensor product.   The proof we give of the latter fact will cover the  interior tensor product cases too, or 
is easily adaptable to those.

\begin{proposition} \label{comm}   Suppose that $M, N$ are dual operator algebras.
If $(Y_k)_{k \in I}$ is a family of  right $w^*$-rigged modules
over $M$, and  $Z$ is a right $w^*$-rigged module over $N$ then we have  $$(\oplus^c_k \; Y_k) \bar{\otimes} Z \cong
\oplus^c_k \; (Y_k  \bar{\otimes} Z) ,$$ unitarily as right $w^*$-rigged modules.   \end{proposition}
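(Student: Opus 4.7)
My plan is to construct two mutually inverse completely contractive weak$^*$ continuous $(M \bar{\otimes} N)$-module maps between $(\oplus^c_k Y_k) \bar{\otimes} Z$ and $\oplus^c_k (Y_k \bar{\otimes} Z)$; these will automatically give a unitary isomorphism of $w^*$-rigged modules. The basic data are the canonical inclusions $i_k : Y_k \to \oplus^c_\ell Y_\ell$ and projections $P_k : \oplus^c_\ell Y_\ell \to Y_k$, which are weak$^*$ continuous completely contractive $M$-module maps satisfying $P_k \circ i_\ell = \delta_{k\ell}\, I_{Y_\ell}$, and with the net of finite partial sums $\sum_{k \in F} i_k \circ P_k$, indexed by finite $F$, converging weak$^*$ to the identity on $\oplus^c_\ell Y_\ell$. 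By the functoriality of $\bar{\otimes}$ from the discussion preceding Theorem \ref{extt}, together with the Corollary following it, the maps $\hat{\imath}_k = i_k \otimes I_Z$ and $\hat{P}_k = P_k \otimes I_Z$ are completely contractive weak$^*$ continuous $(M \bar{\otimes} N)$-module maps satisfying $\hat{P}_k \circ \hat{\imath}_\ell = \delta_{k\ell}\, I$.

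Next, I would assemble these into mutually inverse maps. Define $\Psi : (\oplus^c_k Y_k) \bar{\otimes} Z \to \oplus^c_k (Y_k \bar{\otimes} Z)$ by collecting the $\hat{P}_k$ into a ``column'' via the universal property of the column direct sum for $w^*$-rigged modules (noting that the relevant column norm estimate is automatic, since the $\hat{P}_k$ come from complete contractions of the form $P_k \otimes I_Z$ on the domain $(\oplus^c_\ell Y_\ell) \bar{\otimes} Z$). In the reverse direction, define $\Phi : \oplus^c_k (Y_k \bar{\otimes} Z) \to (\oplus^c_k Y_k) \bar{\otimes} Z$ as the unique weak$^*$ continuous extension of the obvious assignment $(u_k)_{k \in F} \mapsto \sum_{k \in F} \hat{\imath}_k(u_k)$ on finitely supported tuples, using weak$^*$ density of such tuples in the column direct sum and the uniform bound coming from the column structure. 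That $\Psi \circ \Phi$ is the identity reduces on finitely supported tuples to $\hat{P}_k \circ \hat{\imath}_\ell = \delta_{k\ell}\, I$, and then extends by weak$^*$ continuity.

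The main obstacle is the identity $\Phi \circ \Psi = I$, which amounts to showing that the net of finite partial sums $\sum_{k \in F} \hat{\imath}_k \circ \hat{P}_k$ converges weak$^*$ to the identity on $(\oplus^c_k Y_k) \bar{\otimes} Z$. I would verify this first on elementary tensors $y \otimes z$ with $y \in \oplus^c_k Y_k$ and $z \in Z$, where it follows immediately from the corresponding weak$^*$ convergence $\sum_{k \in F} i_k \circ P_k \to I$ on $\oplus^c_k Y_k$ together with separate weak$^*$ continuity of the module action used to form $\bar{\otimes}$, and then promote this to all of $(\oplus^c_k Y_k) \bar{\otimes} Z$ by weak$^*$ density of the span of such elementary tensors together with the uniform boundedness of the partial sums (each $\sum_{k \in F} \hat{\imath}_k \circ \hat{P}_k$ being a complete contraction, as $\sum_{k \in F} i_k \circ P_k$ is). Once this is in hand, $\Phi$ and $\Psi$ are mutually inverse completely contractive weak$^*$ continuous $(M \bar{\otimes} N)$-module maps, hence completely isometric weak$^*$ homeomorphisms, giving the claimed unitary isomorphism of $w^*$-rigged modules.
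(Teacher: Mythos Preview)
Your overall strategy coincides with the paper's: use functoriality of $\bar{\otimes}$ to produce the maps $i_k \otimes I_Z$ and $P_k \otimes I_Z$, check the obvious relations, and then show that the finite partial sums $\sum_{k\in F}(i_k\otimes I_Z)(P_k\otimes I_Z)$ converge weak$^*$ to the identity. The paper streamlines your construction of $\Phi$ and $\Psi$ by simply verifying the hypotheses of \cite[Theorem 3.9]{BK2}, which then hands you the direct-sum decomposition directly; this avoids having to justify separately the column-norm estimate for $\Psi$ and the well-definedness/boundedness of $\Phi$.

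There is, however, a genuine gap at the step you flag as ``the main obstacle''. You propose to pass from weak$^*$ convergence of $\sum_{k\in F}(i_k\otimes I_Z)(P_k\otimes I_Z)$ on elementary tensors to weak$^*$ convergence on all of $(\oplus^c_k Y_k)\bar{\otimes} Z$ by invoking weak$^*$ density of the span of elementary tensors together with uniform boundedness. That inference is not valid in general: a uniformly bounded net of weak$^*$ continuous maps which converges point--weak$^*$ on a weak$^*$ dense set need not converge point--weak$^*$ everywhere (for instance, on $\ell^\infty(\Zdb)$ the maps $I - \text{(shift by }n)$ converge to $I$ point--weak$^*$ on finitely supported sequences but not on constant sequences). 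The paper closes this gap with a subnet argument: any weak$^*$ limit point $R$ of the bounded net $S_t\otimes I$ lies in $\mathbb{B}(Y\otimes_\beta Z)$ by \cite[Theorem 3.5]{BK2}, hence $R$ is itself weak$^*$ continuous; since $R$ agrees with $S\otimes I$ on elementary tensors and both are weak$^*$ continuous, they agree everywhere, and a topology argument finishes. The essential missing ingredient in your sketch is precisely this appeal to the weak$^*$ closedness of $\mathbb{B}(Y\otimes_\beta Z)$, which is what allows one to trade the illegitimate ``density plus boundedness'' promotion for a legitimate uniqueness argument.
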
  

\begin{proof}    We shall prove the more general statement that $$(\oplus^c_k \; Y_k) \otimes_\beta Z \cong
\oplus^c_k \; (Y_k  \otimes_\beta Z) ,$$ unitarily as right $w^*$-rigged modules, where 
$\otimes_\beta$ is any functorial tensor product (that is,  the tensor product of weak* continuous completely contractive
right module maps is also a weak* continuous completely contractive
right module map), that produces a right $w^*$-rigged module from 
right $w^*$-rigged modules, and for which the canonical map $Y \times Z \to Y  \otimes_\beta Z$ is
separately weak* continuous and has range whose span is weak* dense.   These are true for 
the interior and  exterior tensor product (see \cite{BK1} particularly Section 2 there), and e.g.\ 1.6.5 in 
\cite{DBbook}).  

We will use the functoriality of $\otimes_\beta$ and Theorem 3.9 in \cite{BK2}: If $Y = \oplus^c_k \; Y_k$
and $i_k, \pi_k$ are as in that result, then $\pi_k \otimes I$ and $i_k  \otimes I$ are weak* continuous completely  contractive right module maps that  
compose to the identity on $Y_k  \otimes_\beta Z$ (since their composition is weak* continuous and equals
$I$ on the weak* dense subset $Y \otimes Z$).  Similarly, they also satisfy $(\pi_k \otimes I) (i_j  \otimes I)  = 0$ 
if $j \neq k$.   Thus we will be done by  Theorem 3.9 in \cite{BK2} if $\sum_k \, (i_k \otimes I) (\pi_k  \otimes I)  = I$
in the weak* topology of $\mathbb{B}(Y  \otimes_\beta Z)$.     To see this, let $T_\Delta = \sum_{k \in \Delta}
\, i_k \pi_k$ for finite $\Delta \subset I$.    We will be done if $T_\Delta \otimes I \to I$ weak* in $\mathbb{B}(Y  \otimes_\beta Z)$, since $T_\Delta \otimes I  = \sum_{k \in \Delta} \, (i_k \otimes I) (\pi_k  \otimes I)$.

Indeed we shall prove a more general fact, that if a bounded net $S_t \to S$ weak* in $\mathbb{B}(Y)$ then 
$S_t \otimes I \to S \otimes I$ weak* in $\mathbb{B}(Y  \otimes_\beta Z)$.    Suppose that we have a 
weak* convergent subnet $S_{t_\nu}  \otimes I \to R$.   By Theorem 3.5 in \cite{BK2} we have that 
$R \in \mathbb{B}(Y  \otimes_\beta Z)$, and hence $R$ is weak* continuous.   For $y \in Y, z \in Z$ we 
have $S_t(y) \to S(y)$ weak* (this follows since  the latter describes the weak* convergence of bounded nets in 
$CB(Y)$ by e.g.\ 1.6.1 in \cite{DBbook}, and 
by  \cite[Theorem 2.3]{BK2} $\mathbb{B}(Y)$ is a weak* closed subalgebra of $CB(Y)$).  Hence  $$(S_t \otimes I)(y \otimes z) = S_t(y)  \otimes  z \to S(y)  \otimes z
= (S  \otimes I)(y \otimes z) .$$
Thus $R(y \otimes z) =  (S  \otimes I)(y \otimes z)$.  Hence $R =  S  \otimes I$ since they are
weak* continuous and agree on a dense subset.    By topology it follows that 
$S_t \otimes I \to S \otimes I$ weak* as desired.  
\end{proof}

\subsection*{Acknowledgements}

 The author thanks Upasana Kashyap for several discussions, and for prompting him 
to persevere with this project.

\end{document}